\theoremstyle{plain}
\newtheorem{thm}{Theorem}[section]
\newtheorem{lem}[thm]{Lemma}
\newtheorem{prop}[thm]{Proposition}
\newtheorem{cor}[thm]{Corollary}
\theoremstyle{remark}
\newtheorem{rem}[thm]{Remark}
\newtheorem{dft}[thm]{Definition}
\long\def\symbolfootnote[#1]#2{\begingroup%
\def\thefootnote{\fnsymbol{footnote}}\footnote[#1]{#2}\endgroup}
\begin{document}
\title[Fixed point sets and Lefschetz modules]{On fixed point sets and Lefschetz modules for sporadic simple groups}

\maketitle

\begin{center}
John Maginnis\symbolfootnote[1]{Corresponding author.} and Silvia Onofrei \symbolfootnote[0]{{\it Emails}: maginnis@math.ksu.edu (J. Maginnis), onofrei@math.ksu.edu (S. Onofrei).}

{\it Mathematics Department, Kansas State University,\\
137 Cardwell Hall, Manhattan, Kansas 66506}
\end{center}

\begin{abstract}We consider $2$-local geometries and other subgroup complexes for sporadic simple groups. For six groups, the fixed point set of a noncentral involution is shown to be equivariantly homotopy equivalent to a standard geometry for the component of the centralizer. For odd primes, fixed point sets are computed for sporadic groups having an extraspecial Sylow $p$-subgroup of order $p^3$, acting on the complex of those $p$-radical subgroups containing a $p$-central element in their centers. Vertices for summands of the associated reduced Lefschetz modules are described.\\
\subjclass{MSC: Primary: 20C20, 20C34; Secondary: 05E25.}
\end{abstract}

\section{Introduction}
In this paper we study various subgroup complexes of the sporadic simple groups, including $2$-local geometries, the distinguished Bouc complex and the complex of $p$-centric and $p$-radical subgroups. We find information on the structure of the associated reduced Lefschetz modules, such as vertices and their distribution into the blocks of the group ring. Robinson \cite{rob88} reformulated a theorem of Burry-Carlson \cite{bc82} and Puig \cite{puig81} to relate the indecomposable summands with given vertex of the reduced Lefschetz module to the indecomposable summands with the same vertex of a corresponding Lefschetz module for the subcomplex fixed by the action of the vertex group. Therefore the nature of the fixed point sets is essential for the understanding of the properties of the reduced Lefschetz modules. Our approach will combine this theorem with standard results from modular representation theory.

The best known example of a Lefschetz module is the Steinberg module of a Lie group $G$ in defining characteristic $p$. This module is irreducible and projective and it is the homology module of the associated building. For finite groups in general, the Brown complex of inclusion chains of nontrivial $p$-subgroups (as well as any complex which is $G$-homotopy equivalent to it) has projective reduced Lefschetz module; see \cite[Corollary 4.3]{qu78}. For a Lie group in defining characteristic, the Brown complex is $G$-homotopy equivalent to the building \cite[Theorem 3.1]{qu78}, and thus its Lefschetz module is equal to the corresponding Steinberg module.

Ryba, Smith and Yoshiara \cite{rsy} proved projectivity for Lefschetz modules of $18$ sporadic geometries. The characters of these projective modules (where available) and the decomposition into projective covers of irreducibles are also given. Smith and Yoshiara \cite{sy} approached the study of projective Lefschetz modules in a more systematic way. The projectivity results are obtained via homotopy equivalence with the Quillen complex of nontrivial elementary abelian $p$-subgroups. Their results show that the $2$-local geometries for the sporadic groups of local characteristic $2$ have projective Lefschetz modules.

The collection of nontrivial $p$-centric and $p$-radical subgroups (see Section $3$ for precise definitions) is relevant to both modular representation theory as well as mod-$p$ cohomology of the underlying group. Sawabe \cite[Section 4]{sa05} showed that its reduced Lefschetz module is projective relative to the collection of $p$-subgroups which are $p$-radical but not $p$-centric.

\smallskip
In Section $2$, we assume $p=2$ and discuss six sporadic simple groups:  $M_{12}, J_2, HS, Ru, Suz$ and $Co_3$.
Each has two classes of involutions; the $2$-central involutions are closed under commuting products.
The fixed point sets of the $2$-central involutions are contractible; we determine the structure of the fixed point sets of the noncentral involutions and we relate these fixed point sets to sporadic geometries and buildings associated to a component of the centralizer of the noncentral involution. We also determine the vertices and the distribution of the nonprojective indecomposable summands of the Lefschetz module into the $2$-blocks of the group ring. These results are contained in Theorems $2.1$ and $2.11$.

We will refer to a $p$-group as being distinguished if it contains a $p$-central element in its center; see the paper \cite{mgo3} for further information on distinguished collections. For odd primes, we analyze in detail the distinguished $p$-radical complexes for those sporadic simple groups which have a Sylow $p$-subgroup of order $p^3$. The main results are given in Theorem $3.1$. The homotopy type of the fixed point sets of the subgroups of order $p$ are determined, and information on the structure of the corresponding Lefschetz modules is given. Our computations provide several examples of reduced Lefschetz modules which are acyclic and contain nonprojective indecomposable summands in the principal block of the group ring; these examples are obtained for $p=3$ ($He$ and $M_{24}$), $p=7$ and $Fi'_{24}$ and for $p=13$ and $M$, the Monster group.

In the second part of Section $3$ we determine the properties of the reduced Lefschetz modules associated to the complex of $p$-centric and $p$-radical subgroups. The groups discussed are those sporadic groups with extraspecial Sylow $p$-subgroup and which do not have parabolic characteristic $p$; the groups $J_2$, $M_{24}$ and $He$, with $p=3$ fall into this category.

\smallskip
We use the standard notation for finite groups as in the Atlas \cite{Atlas}. If $p$ is a prime, then $p^n$ denotes an elementary abelian $p$-group of rank $n$ and $p$ a cyclic group of order $p$. For $p$ odd, $p^{1+2}_+$ stands for the extraspecial group of order $p^3$ and exponent $p$. Also $H.K$ denotes an extension of the group $H$ by a group $K$ and $H:K$ denotes a split extension. Regarding the conjugacy classes of elements of order $p$ we follow the Atlas \cite{Atlas} notation. The simplified notation of the form $5A^2$ stands for an elementary abelian group $5^2$ whose $24$ nontrivial elements are all of type $5A$. The notation of the form $7AB$ stands for a group of order $7$ which contains elements from both classes $7A$ and $7B$. A great deal of information on the $p$-local structure of the sporadic simple groups used in Section $3$ can be found in \cite[Section 5.3]{gls3}. In order to determine if certain classes of $p$-elements are closed under taking commuting products we compute the class multiplication coefficient denoted $\xi(x,y,z)$ using the built-in function in GAP \cite{gap}.

For both characters and Brauer characters, we use the notation from the Modular Atlas Homepage \cite{moc}, where $\phi_i$ denotes the Brauer character of a simple module in characteristic $p$ and $\chi_i$ is the character of a simple module in characteristic zero, also given in \cite{Atlas}. Then $P_G(\phi_i)$ denotes the projective cover of $\phi_i$.

\section{Sporadic groups which satisfy the closure property for $p=2$}

\subsection{Fixed point sets}
Let $Co_3$ be the third Conway group, one of the $26$ sporadic simple groups. There is a $2$-local geometry $\Delta$ of $Co_3$, described by Ronan and Stroth \cite{rstr84}, with three types of objects having isotropy groups equal to three of the maximal $2$-local subgroups, $2^.Sp_6(2), 2^{2+6}.3.(S_3 \times S_3)$, and ${2^4}^. A_8$. The authors have shown in \cite[Proposition 6.1]{mgo1} that the fixed point set
$\Delta^z$ of a $2$-central involution $z$ is contractible. However, the fixed point set $\Delta^t$ of a noncentral involution $t$ is not contractible (this is related to the fact that the mod-$2$ reduced Lefschetz module of $\Delta$ is not projective). The
centralizer of a noncentral involution is $C_{Co_3}(t) = 2 \times M_{12}$, so that the Mathieu group $M_{12}$ acts on the fixed point set $\Delta^t$. Also $M_{12}$ has its own $2$-local geometry, with two types of objects, stabilized by $2_+^{1+4}.S_3$ and $4^2:(2 \times S_3$); see for example \cite[Section 8.27]{bs04}.

In the thesis of P. Grizzard \cite{gri}, the character of the Lefschetz module is computed for $Co_3$ and seven other sporadic groups. This yields the Euler characteristics of the fixed point sets. Grizzard noticed that the fixed point set $\Delta^t$ and the $2$-local
geometry for $M_{12}$ have the same Euler characteristic, and conjectured that they are homotopy equivalent. With equal Euler characteristics as evidence, Grizzard conjectures that for six of the eight sporadic groups he investigated, the fixed point set for a noncentral involution is homotopy equivalent to a standard $2$-local geometry for the component of the centralizer of the noncentral involution. The purpose of this section is to prove this conjecture.

\begin{thm} Let $G$ be one of the following six sporadic simple groups: the Mathieu group $M_{12}$, the Hall-Janko-Wales group $J_2$, the Higman-Sims group $HS$, the third Conway group $Co_3$, the Rudvalis group $Ru$, or the Suzuki group $Suz$. Let $H \leq G$ be the component of the centralizer $C_G(t)$ of a noncentral involution $t \in G$. Then $H$ and $G$ have standard $2$-local geometries such that the geometry for $H$ is equivariantly homotopy equivalent to the fixed point set for the involution $t$ acting on the geometry for $G$.
\end{thm}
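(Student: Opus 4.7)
My plan is to proceed case by case through the six sporadic groups, but within a uniform framework drawn from equivariant poset topology. For each $G$ I would first recall from the literature (Ronan--Stroth and its successors, and the Benson--Smith catalogue) the description of the $2$-local geometry $\Delta$ as the order complex of a poset of $2$-local subgroups having the relevant two or three conjugacy classes of object types. A simplex is a chain $P_0 < P_1 < \cdots < P_k$; since chains under inclusion have no nontrivial automorphisms, such a simplex lies in $\Delta^t$ exactly when $t$ normalizes each $P_i$ individually. I would then record, for each group, the component $H$ of $C_G(t)$ together with its standard $2$-local geometry $\Delta_H$. A convenient common feature is that $C_G(t)$ splits as $\langle t\rangle\times H$ (as already exhibited for $Co_3$ in the introduction), so any $2$-local subgroup $P$ of $H$ extends naturally to a $2$-subgroup $P\langle t\rangle$ of $G$ that is normalized by $t$.

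The crux is the construction of an equivariant poset map $\phi\colon\Delta_H\to\Delta^t$ defined on objects by $P\mapsto P\langle t\rangle$, with the occasional modification that certain objects of $\Delta$ already sit inside $H$ and need not be thickened. The first substantive check is that $\phi$ preserves object types and incidence relations between the two geometries; this imposes a tight matching between the parabolic structure of $H$ and the $t$-normalized parabolic structure of $G$, and is already a mild miracle that presumably motivated Grizzard's conjecture. Once $\phi$ is defined, I would invoke the Th\'evenaz--Webb equivariant form of Quillen's fiber theorem: to conclude that $\phi$ is an equivariant homotopy equivalence it suffices to show that for every subgroup $K\leq H$ and every simplex $\sigma$ of $\Delta^t$ fixed by $K$, the under-fiber $\phi^{-1}(\Delta^t_{\leq\sigma})^K$ is contractible. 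In favourable cases this fiber is a simplex or a cone whose apex is $P\cap H$ for the $2$-subgroup $P$ indexing $\sigma$.

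The principal obstacle is the presence of $2$-subgroups $Q$ of $G$ that are normalized by $t$ but are \emph{not} of the form $P\langle t\rangle$ with $P\leq H$, namely those on which $t$ acts nontrivially with $C_Q(t)$ of index two. Such $Q$ produce simplices of $\Delta^t$ a priori outside the image of $\phi$, and the key step for each group is to show that the subcomplex they generate collapses equivariantly onto the image, typically by an explicit deformation retraction $Q\mapsto C_Q(t)$ or, equivalently, by exhibiting a closure operator on the relevant portion of the poset. The bookkeeping is heaviest in $Co_3$, $Ru$, and $Suz$, where three parabolic types must be tracked and the component $H$ sits inside a not-quite-split extension within $C_G(t)$; for $M_{12}$, $J_2$, and $HS$ the verification should reduce to a direct inspection of a bipartite geometry. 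Throughout, the numerical agreement of Euler characteristics computed in Grizzard's thesis serves as a valuable consistency check for the combinatorial description of $\Delta^t$ before the homotopical machinery is applied, and after the equivalence is proven it will automatically yield the promised conclusions about vertices and block distribution of the nonprojective summands appearing in Theorem~$2.11$.
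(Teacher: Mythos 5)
Your proposal has one ingredient in common with the paper — the retraction $Q\mapsto C_Q(t)$, which is exactly the map $f(A)=A\cap C_G(t)$ used in the paper's Lemma~2.6 — but the overall strategy is built on a mischaracterization of the geometry and omits the step that makes the whole argument tractable. The standard $2$-local geometries here are coset geometries (flag complexes on cosets of the maximal $2$-local subgroups), not order complexes of chains $P_0<P_1<\cdots<P_k$ of subgroups under inclusion: for $M_{12}$, for instance, the two vertex stabilizers $2^{1+4}_+{:}S_3$ and $4^2{:}D_{12}$ are incomparable, so your description of simplices as inclusion chains of $2$-local subgroups, and your criterion for a simplex to lie in $\Delta^t$, do not apply to the complex the theorem is about. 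On such a coset geometry there is no natural poset structure on which to run either your map $\phi\colon P\mapsto P\langle t\rangle$ or the closure operator $Q\mapsto C_Q(t)$.

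The missing idea is to first replace the standard geometry, equivariantly, by the \emph{Benson complex}: the order complex of the poset of nontrivial elementary abelian $2$-subgroups all of whose involutions are $2$-central (this equivalence is Lemma~2.2, quoted from Benson--Smith, and uses that the $2$-central class is closed under commuting products). On that poset your retraction idea works cleanly and globally, not just as a patch: $t$ acting on an elementary abelian $2$-group $A$ fixes a nonidentity element, so $f(A)=A\cap C_G(t)$ is a nontrivial purely $2$-central subgroup and $f$ deformation-retracts $\Delta^t$ onto the subcomplex of $A\leq C_G(t)$. The remaining content is then purely group-theoretic (the paper's Lemma~2.3): the $2$-central involutions of $G$ inside $C_G(t)$ are exactly the $2$-central involutions of $\overline H$ (with $\overline H=S_6$ in the $HS$ case), so the retract \emph{is} the Benson complex of $\overline H$, which in turn is the building/standard geometry for $H$. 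Your route instead requires showing that $P\langle t\rangle$ is an object of the correct type in the geometry for $G$ and that all Quillen fibers are contractible equivariantly; you flag the first point as ``a mild miracle'' but do not establish it, and in general the thickening $P\langle t\rangle$ of a maximal parabolic of $H$ need not be conjugate to any object stabilizer of the geometry for $G$. As written, the proposal therefore does not close.
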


A $2$-{\it local geometry} for $G$ is a simplicial complex whose vertex stabilizers are maximal $2$-local subgroups of $G$. If $G$ is one of the six sporadic groups mentioned in Theorem $2.1$, the term {\it standard} indicates the fact that we are working with the $2$-local geometries considered in Benson and Smith \cite{bs04}. If $G$ is a Lie group in defining characteristic, then the standard geometry is the corresponding Tits building.

The six groups in Theorem $2.1$ have several important similarities. There are two conjugacy classes of involutions; one class consists of the $2$-{\it central} involutions, those lying in the center of a Sylow $2$-subgroup. The collection of $2$-central involutions is {\it closed}, in the sense that the product of two distinct commuting $2$-central involutions is always an involution of central type. The centralizer of a noncentral involution has a single component $H$, which in these cases is a simple group, normal in the centralizer. Table $2.1$ describes the centralizer $C_G(t)$ of a noncentral involution $t\in G$, and its component $H$.

The {\it Quillen complex} of $G$ is the complex of proper inclusion chains of nontrivial elementary abelian $p$-subgroups of $G$. The {\it Benson complex} is a subcomplex of the Quillen complex; its vertices are elementary abelian $p$-subgroups of $G$ whose elements of order $p$ lie in certain conjugacy classes, generated under commuting products by the $p$-{\it central elements} (those elements of order $p$ which lie in the center of a Sylow $p$-subgroup of $G$). This complex was introduced by Benson \cite{ben94} in order to study the mod-$2$ cohomology of $Co_3$.

\begin{center}
\begin{tabular}{|c|c|c|}
  \hline
  $G$ & $C_G(t)$ & $H$\\
  \hline
  $M_{12}$ & $2 \times S_5$ & $A_5$\\
  $J_2$ & $2^2 \times A_5$ & $A_5$ \\
  $HS$ & $2 \times Aut(A_6)$ & $A_6$ \\
  $Co_3$ & $2 \times M_{12}$ & $M_{12}$ \\
  $Ru$ & $2^2 \times Sz(8)$ & $Sz(8)$ \\
  $Suz$ & $(2^2 \times L_3(4)).2$& $L_3(4)$ \\
  \hline
\end{tabular}

\vspace*{.3cm}
{\it Table 2.1: Centralizers of noncentral involutions and their components}
\end{center}

\begin{lem}[Benson and Smith \cite{bs04}] For the six groups in Theorem 2.1,
the standard 2-local geometry for $G$ is equivariantly homotopy equivalent to the Benson complex.
\end{lem}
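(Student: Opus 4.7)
The plan is to extract this identification from the detailed case analysis of Benson and Smith \cite{bs04}. The closure property of $2$-central involutions, which characterizes precisely the six groups in Theorem 2.1, gives the Benson complex a particularly clean description: its simplices correspond exactly to sets of pairwise commuting $2$-central involutions, since closure guarantees that every such set generates an elementary abelian subgroup whose nontrivial elements are all $2$-central. Thus the Benson complex is just the order complex of the poset of elementary abelian subgroups generated by commuting $2$-central involutions.

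The key step I would take is to construct an equivariant poset map $\varphi$ from the face poset of the standard $2$-local geometry $\Gamma(G)$ to the Benson poset, sending a simplex $\sigma$ to the elementary abelian subgroup generated by the $2$-central involutions in $\Omega_1(Z(O_2(G_\sigma)))$. The closure property ensures that this target indeed lies in the Benson poset. I would then apply Quillen's Theorem A to reduce the equivariant homotopy equivalence to the contractibility of each fiber, which amounts to showing that the fixed subcomplex $\Gamma(G)^E$ is contractible for every vertex $E$ of the Benson complex. Because the vertex stabilizers of $\Gamma(G)$ are maximal $2$-local subgroups, each such $E$ sits in the center of some vertex stabilizer, and I expect its fixed subcomplex to deformation-retract onto a cone on that vertex.

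The main obstacle is that the argument must be carried out separately for each of the six groups, since the standard geometries differ in dimension, number of orbits of simplices, and local combinatorics: $M_{12}$ and $J_2$ should be essentially immediate, while $Co_3$ and $Suz$ have geometries with three types of objects and more intricate parabolic structure. In those richer cases I would have to enumerate carefully the commuting sets of $2$-central involutions inside each maximal parabolic subgroup in order to verify surjectivity of $\varphi$ up to conjugacy and to pin down the fibers. Fortunately, the necessary $2$-local information for each of the six geometries is tabulated in \cite{bs04}, so once the general poset map is in place the verification reduces to a bookkeeping exercise, group by group.
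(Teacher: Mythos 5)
This lemma is an attribution: the paper gives no argument of its own beyond citing Benson and Smith \cite[Chapter 8]{bs04}, where the equivalence is established case by case for each of the six groups, generally by comparing both complexes to intermediate collections via Quillen-type fiber arguments tailored to each geometry. So the honest comparison is between your proposed uniform framework and the case-by-case treatment in the reference. Your framework has a concrete flaw at its central step. If $\varphi$ is a poset map from the face poset of $\Gamma(G)$ to the Benson poset, Quillen's Theorem A (and its equivariant refinement, which you need here) requires contractibility of the fibers $\varphi^{-1}(\mathcal{B}_{\geq E})$ or $\varphi^{-1}(\mathcal{B}_{\leq E})$, i.e.\ the subposet of simplices $\sigma$ with $E \leq \varphi(\sigma)$ (resp.\ $\varphi(\sigma)\leq E$). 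This is \emph{not} the fixed point set $\Gamma(G)^E$: the condition $E\leq \Omega_1(Z(O_2(G_\sigma)))$ is strictly stronger than $E\leq N_G(G_\sigma)$, so the fiber is only a subcomplex of $\Gamma(G)^E$, and contractibility of one does not follow from contractibility of the other. Worse, proving that $\Gamma(G)^E$ is contractible for $2$-central $E$ is essentially Proposition 2.8 applied to $\Gamma(G)$, which in this paper is deduced \emph{from} the equivalence with the Benson complex; using it to prove the equivalence would be circular unless you supply an independent argument.

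There is also an overreach in the setup: the closure property does not ``characterize precisely'' these six groups (they are simply the ones from Grizzard's list), and for the richer geometries ($Co_3$, $Suz$) the map $\sigma\mapsto\langle\text{$2$-central involutions in }\Omega_1(Z(O_2(G_\sigma)))\rangle$ need not even be well behaved without checking that these subgroups are nontrivial and purely central for every type of simplex. What you dismiss as a ``bookkeeping exercise'' --- verifying surjectivity up to conjugacy, identifying the fibers, and checking their ($G_\sigma$-equivariant) contractibility separately for each group --- is exactly the nontrivial content that occupies Chapter 8 of \cite{bs04}, and it is not carried out here. As written, the proposal is a plausible outline with an incorrect reduction in the middle and the essential verifications deferred; to repair it you would either need to work with the actual Quillen fibers and prove their contractibility group by group, or follow \cite{bs04} directly.
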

\begin{proof} This result is proved case by case in the book of Benson and Smith \cite[Chapter 8]{bs04}.
\end{proof}

For the remainder of this section, let $\Delta$ denote the Benson complex for $G$, where $G$
is one of the six groups in Theorem $2.1$. Since the collection of $2$-central involutions is closed, the simplices of $\Delta$ are chains of elementary abelian $2$-subgroups of purely central type (every involution is $2$-central).

\begin{lem} Let $t$ be a noncentral involution in $G$, and let $H$ be the component of the centralizer $C_G(t)$. Let $\overline{H} = H$ except in the case $G = HS$, where $\overline{H} = S_6 \leq Aut(A_6)$. Then the $2$-central involutions of $G$ which lie in $C_G(t)$ actually lie in $\overline{H}$. These are precisely the $2$-central involutions of $\overline{H}$.
\end{lem}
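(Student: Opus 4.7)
The plan is to argue case by case through the six groups in Table 2.1, combining the given structure of $C_G(t)$ with a pairing argument that comes from the closure property of the 2-central class. If $z \in C_G(t)$ is any 2-central involution of $G$ other than $t$, then $z' := tz$ is an involution commuting with $z$, and $z'$ cannot also be 2-central: otherwise the product $zz' = t$ would be a product of commuting 2-central involutions, forced by closure to be 2-central, contradicting the hypothesis on $t$. Hence in each commuting pair $\{x, tx\}$ of involutions of $C_G(t)$, at most one member is 2-central in $G$. This cuts the candidate set of 2-central involutions in $C_G(t)$ roughly in half before any case analysis begins.

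Next, for each of the six groups I would use Table 2.1 to decompose $C_G(t)$ and enumerate its involutions, matching them to $G$-classes via centralizer orders read from the Atlas. In the direct-product cases $M_{12}$ and $Co_3$, where $C_G(t) = \langle t \rangle \times H$, the involutions are $t$, involutions of $H$, and their products with $t$; the pairing argument reduces matters to showing that an involution $h \in H$ is 2-central in $G$ if and only if it is 2-central in $H$. The cases $J_2$ and $Ru$ are similar but with an extra central $2^2$ factor, giving one additional pair of involutions to adjudicate. For $HS$, the strict inclusion $\overline{H} = S_6 \supsetneq H = A_6$ forces me to show that the appropriate outer involutions of $S_6 \subset \mathrm{Aut}(A_6)$ fuse into the 2-central class of $HS$ rather than the noncentral one, so that the 2-central involutions of $\overline{H}$ include contributions from both $A_6$ and its $S_6$-completion. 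For $Suz$, the nonsplit extension $(2^2 \times L_3(4)).2$ is the most intricate, as I must examine the action of the outer involution on $2^2 \times L_3(4)$ before applying the pairing argument to the outer coset.

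To determine which candidate involutions are 2-central in $G$, I would compute centralizer orders in $G$ using the decomposition of $C_G(t)$ and compare against the Atlas: a 2-central involution has centralizer of order divisible by $|G|_2$, whereas a noncentral one has strictly smaller 2-part. Once the $G$-class of each involution in $C_G(t)$ has been identified, the lemma reduces to verifying that the 2-central ones arise exactly from the 2-central involutions of $\overline{H}$. The main obstacles I anticipate are $HS$ (where the strict inclusion $\overline{H} \supsetneq H$ means I must track how $S_6$-fusion interacts with $G$-fusion) and $Suz$ (where the nonsplit outer coset complicates the enumeration of involutions); both should yield to a direct Atlas consultation, aided if needed by the class multiplication coefficient computation in GAP mentioned in the introduction.
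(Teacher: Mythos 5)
Your overall route is the same as the paper's: a case-by-case determination, from Atlas data, of which $G$-classes the involutions of $C_G(t)$ fall into. The paper disposes of the two hard cases by citation (Aschbacher's Lemma 1.7 for $HS$, the paragraph before Lemma 1 of Yoshiara's paper for $Suz$) and handles the rest with quick fusion observations, chiefly that squares of order-4 elements are 2-central and that $A_5$, $Sz(8)$, $L_3(4)$ have a single class of involutions while every involution of $S_6$ is 2-central in $S_6$. Your pairing argument --- closure of the 2-central class forces at most one of $x$, $tx$ to be 2-central --- is correct and is a genuine addition not in the paper; it kills the coset $t\overline{H}$ for free once the involutions of $\overline{H}$ are identified. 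Note, however, that it only halves the work: for pairs $\{x,tx\}$ with neither member in $\overline{H}$ (the transpositions of $S_5$ inside $M_{12}$, the extra central $2^2$ in $J_2$ and $Ru$), you must still rule out \emph{both} members, which the pairing alone cannot do.

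The one step that will not work as written is identifying $G$-classes by ``computing centralizer orders in $G$ using the decomposition of $C_G(t)$.'' From inside $C_G(t)$ you only see $C_G(t)\cap C_G(h)$, whose order merely divides $|C_G(h)|$; its 2-part is generally far below $|G|_2$ even when $h$ is 2-central (for an involution of $A_5\leq 2\times S_5\leq M_{12}$ it is $16$, while $|M_{12}|_2=64$), so this lower bound can never certify 2-centrality and does not separate the two classes. You need the Atlas power maps (exactly the paper's remark that the square of an order-4 element is 2-central), explicit knowledge of the embedding, or the cited structural results for $HS$ and $Suz$; the class multiplication coefficients $\xi(x,y,z)$ govern products of classes, not the fusion of elements of a given subgroup, so GAP's $\xi$ is not a substitute here. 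With that step repaired, your plan goes through and is essentially the paper's argument plus the pairing observation.
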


\begin{proof} For $G = HS$, this result is Lemma $1.7$ of Aschbacher \cite[p. 24]{a03}. For $G=Suz$, see the
paragraph on p. $456$ before Lemma $1$ of Yoshiara \cite{y01}. For the other cases, this result follows from
information in the Atlas \cite{Atlas}; for example, the square of an element of order $4$ is $2$-central. Observe that $A_5$, $Sz(8)$ and $L_3(4)$ have only one conjugacy class of involutions, and in
$S_6$ every involution is $2$-central.
\end{proof}

\begin{lem} The standard $2$-local geometry for $H$ is equivariantly homotopy equivalent to the Benson complex for $\overline{H}$.
\end{lem}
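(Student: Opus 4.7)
The plan is to treat the components $H$ appearing in Table $2.1$ case by case, grouped according to their structure: (i) the sporadic case $H = M_{12}$ (for $G = Co_3$), (ii) the Lie-type cases $H = \overline{H} \in \{A_5, Sz(8), L_3(4)\}$ (for $G \in \{M_{12}, J_2, Ru, Suz\}$), and (iii) the case $H = A_6$, $\overline{H} = S_6$ (for $G = HS$).

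Case (i) is immediate: since $M_{12}$ itself is one of the six groups in Theorem $2.1$, Lemma $2.2$ applied to $M_{12}$ gives the required $M_{12}$-equivariant homotopy equivalence between the standard $2$-local geometry for $M_{12}$ and its Benson complex, which coincides with the Benson complex for $\overline{H} = M_{12}$.

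For case (ii), each $H$ is a finite simple group of Lie type in defining characteristic $2$, and its standard $2$-local geometry is the associated Tits building (five points for $A_5 \cong L_2(4)$, the Suzuki ovoid for $Sz(8)$, the projective plane $PG(2,4)$ for $L_3(4)$). By Quillen's theorem \cite[Theorem 3.1]{qu78}, the Brown complex of $H$, and therefore the $H$-homotopy equivalent Quillen complex, is $H$-equivariantly homotopy equivalent to this building. Since each of $A_5$, $Sz(8)$, $L_3(4)$ has a single conjugacy class of involutions, all $2$-central, every elementary abelian $2$-subgroup of $H$ is of purely central type and the closure condition on commuting products is vacuous. Hence the Benson complex for $\overline{H} = H$ equals the Quillen complex for $H$, yielding the claim.

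Case (iii) exploits the isomorphism $A_6 \cong Sp_4(2)'$. The standard $2$-local geometry for $A_6$ (in the sense of \cite{bs04}) is the Tits building of the ambient Lie-type group $Sp_4(2) = S_6$, namely the generalized quadrangle $GQ(2,2)$, on which $A_6$ acts via the inclusion $A_6 \leq S_6$. Applying Quillen's theorem to the Lie-type group $S_6$ gives an $S_6$-equivariant (hence $A_6$-equivariant) homotopy equivalence between the Quillen complex of $S_6$ and this building. By Lemma $2.3$ every involution of $S_6$ is $2$-central, so the closure property is trivial in $S_6$ and the Benson complex for $\overline{H} = S_6$ coincides with its Quillen complex; composing the equivalences produces the desired one.

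The principal obstacle lies in case (iii): identifying the standard $2$-local geometry for $A_6$ with the $Sp_4(2)$-building and confirming that the equivariance delivered by Quillen's theorem restricts appropriately to the subgroup $A_6 < S_6$. A secondary point in case (ii) is verifying that the standard $2$-local geometry (in the sense of Benson-Smith) for each small simple Lie-type component $H$ agrees with the classical Tits building; this is well known but should be explicitly checked in each case.
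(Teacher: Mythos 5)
Your proposal is correct and follows essentially the same route as the paper: handle $M_{12}$ via Lemma 2.2, observe that the Benson complex equals the Quillen complex for $A_5 \simeq L_2(4)$, $S_6 \simeq Sp_4(2)$, $Sz(8)$ and $L_3(4)$ (all involutions being $2$-central), and invoke Quillen's theorem to identify the Quillen complex with the Tits building. Your extra care in case (iii) about restricting the $S_6$-equivariance to $A_6$ is a reasonable elaboration of a point the paper leaves implicit.
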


\begin{proof} For $M_{12}$, this was stated in Lemma $2.2$. The Benson complex equals the Quillen complex
for $A_5\simeq L_2(4), S_6 \simeq Sp_4(2), Sz(8)$ and $L_3(4)$. For a group of Lie type in defining characteristic, the Quillen complex is homotopy equivalent to the Tits building; see \cite[Theorem 3.1]{qu78}.
\end{proof}

\begin{dft} Let $\Delta_0^t$ denote the subcomplex of the Benson complex $\Delta$ for $G$ which consists of chains of purely $2$-central elementary abelian $2$-subgroups $A$ satisfying $t \in C_G(A)$ or equivalently $A \leq C_G(t)$.
\end{dft}

By Lemma $2.3$, $\Delta_0^t$ equals the Benson complex for $\overline{H}$, and so $\Delta_0^t$ is equivariantly homotopy equivalent to the standard $2$-local geometry for $H$. Note that $\Delta_0^t$ is a subcomplex of the fixed point set $\Delta^t$, which consists of those chains of elementary abelian $2$-subgroups $A$ of purely central type satisfying $t \in N_G(A)$.

\begin{lem} The subcomplex $\Delta_0^t$ is equivariantly homotopy equivalent to $\Delta^t$.
\end{lem}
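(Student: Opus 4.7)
The plan is to exhibit an explicit equivariant deformation retraction of $\Delta^t$ onto $\Delta_0^t$ by means of the poset map $f(A) = C_A(t) = A \cap C_G(t)$. For any vertex $A$ of $\Delta^t$, the condition $t \in N_G(A)$ means that $t$ acts by conjugation on the elementary abelian $2$-group $A$, so $C_A(t)$ is the fixed subgroup. It is automatically elementary abelian, its nontrivial elements are all $2$-central in $G$ (being elements of $A$, which is purely central), and by construction it is contained in $C_G(t)$, so it would be a vertex of $\Delta_0^t$ provided that it is nontrivial. The map is clearly a poset map (subgroups go to subgroups), satisfies $f(A) \leq A$, and restricts to the identity on $\Delta_0^t$ (where $t$ acts trivially on $A$). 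Moreover, $f$ commutes with conjugation by any element of $C_G(t)$, so $f$ is $C_G(t)$-equivariant.

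The only real point to verify is that $f(A) \neq 1$ for every vertex $A$ of $\Delta^t$. Viewing $A$ as an $\mathbb{F}_2$-vector space on which $t$ acts as an involution, the operator $T = t - 1$ satisfies $T^2 = t^2 - 2t + 1 = 0$ in characteristic $2$, so $T$ is nilpotent with $\mathrm{im}(T) \subseteq \ker(T) = C_A(t)$. By rank-nullity, $\dim C_A(t) \geq \tfrac12 \dim A \geq \tfrac12$, hence $C_A(t)$ is nontrivial whenever $A$ is. Thus $f: \Delta^t \to \Delta_0^t$ is well-defined.

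Given this, the inequality $f \leq \mathrm{id}_{\Delta^t}$ of poset maps implies that $i \circ f$ and $\mathrm{id}_{\Delta^t}$ are homotopic on the order complex by the standard straight-line homotopy (see \cite[1.3]{qu78}), and since $f \circ i = \mathrm{id}_{\Delta_0^t}$, the inclusion $i: \Delta_0^t \hookrightarrow \Delta^t$ is a homotopy equivalence. The same argument applied to $K$-fixed subposets for any subgroup $K \leq C_G(t)$ (both $f$ and the homotopy being equivariant) yields a $C_G(t)$-equivariant homotopy equivalence. The only genuinely delicate step in this plan is the nontriviality of $C_A(t)$, and the characteristic-$2$ computation above resolves it uniformly, with no need for case-by-case analysis.
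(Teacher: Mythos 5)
Your proof is correct and follows essentially the same route as the paper: the same poset map $f(A)=A\cap C_G(t)=C_A(t)$, the same key observation that $t$ must fix a nonidentity element of $A$ (which you justify by a rank--nullity computation where the paper invokes the standard fixed-point fact for $p$-groups acting on $p$-groups), and the same conclusion from $f\leq\mathrm{id}$ via the equivariant version of Quillen's homotopy criterion, for which the paper cites \cite[2.2(3)]{gs}.
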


\begin{proof} Define a map $f: \Delta ^t \rightarrow \Delta ^t$ by $f(A)=A \cap C_G(t)$, where $A$ is a purely $2$-central elementary abelian $2$-subgroup of $G$ with $t \in N_G(A)$. Observe that the action of $t$ on $A$ must fix at least one nonidentity element, so that $f(A) \not = \lbrace e \rbrace$. Thus $f(A) \in \Delta$, which is closed under passing to nontrivial
subgroups. Clearly $f(A) \in \Delta _0 ^t \subseteq \Delta ^t$. The map $f$ is a $C_G(t)$-equivariant poset map
satisfying $f(A) \leq A$ and therefore $\Delta ^t$ is $C_G(t)$-homotopy equivalent to $\Delta _0^t$, the image of $f$; see \cite[2.2(3)]{gs}.
\end{proof}

We end the section with a summary of the arguments used to prove the theorem:

\begin{proof}[Proof of Theorem 2.1] Let $G$ denote one of the six groups in Theorem $2.1$. Each of these groups has a standard $2$-local geometry which is equivariantly homotopy equivalent to the Benson complex $\Delta$ of $G$ (Lemma $2.2$). By Lemma $2.6$, the fixed point set $\Delta^t$ of a noncentral involution $t$ in $G$ is equivariantly homotopy equivalent with $\Delta_0^t$, which is the Benson complex of the component $H$ of $C_G(t)$, by Lemma $2.3$ and Definition $2.5$. Finally, according to Lemma $2.4$, the Benson complex of $H$ is equivariantly homotopy equivalent to the standard $2$-local geometry for $H$. This concludes the proof of the theorem.
\end{proof}

\subsection{Reduced Lefschetz modules}
Information on the fixed point sets leads to more details on the {\it reduced
Lefschetz module}:
$$\widetilde{L}_G(\Delta; \mathbb{F}_p)= \sum _{i=-1}^{{\rm dim} \Delta} (-1)^i C_i (\Delta ; \mathbb{F}_p)$$
of the augmented chain complex of $\Delta$; here $\mathbb{F}_p$ denotes a field of characteristic $p$, which is a splitting field for $G$ and all its subgroups. The reduced Lefschetz module can also be written in terms of induced modules:
$$\widetilde{L}_G(\Delta: \mathbb{F}_p)= {\sum _{\sigma \in \Delta} (-1)^{ {\rm dim} \sigma} Ind_{G_{\sigma}}^G ( \mathbb{F}_p  }) - \mathbb{F}_p.$$

A theorem due to Burry and Carlson \cite[Theorem 5]{bc82} and to Puig \cite{puig81} was applied by Robinson \cite[in the proof of Corollary 3.2]{rob88} to Lefschetz modules to obtain the following result; also see \cite[Lemma 1]{sa06}:

\begin{thm}[Robinson \cite{rob88}] The number of indecomposable summands of $\widetilde{L}_G(\Delta;\mathbb{F}_p)$ with vertex $Q$ is equal to the number of indecomposable summands of $\widetilde{L}_{N_G(Q)}(\Delta ^Q;\mathbb{F}_p)$ with the same vertex $Q$.
\end{thm}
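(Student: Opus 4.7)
The plan is to deduce the statement from the Burry-Carlson-Puig correspondence by applying it termwise to the virtual Lefschetz module. Recall that Burry-Carlson-Puig asserts: for a finite-dimensional $\mathbb{F}_pG$-module $M$ and any $p$-subgroup $Q \leq G$, the indecomposable summands of $M$ with vertex exactly $Q$ are in natural bijection, via Green correspondence, with the indecomposable summands of the $\mathbb{F}_pN_G(Q)$-module $M^Q$ that have the same vertex $Q$. Because both the formation of $M^Q$ and Green correspondence are additive on direct sums, this bijection extends immediately to virtual modules if one counts indecomposables with signed multiplicity.

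I would then rewrite the reduced Lefschetz module in its combinatorial form
$$\widetilde{L}_G(\Delta;\mathbb{F}_p) = \Bigl(\sum_{\sigma \in \Delta}(-1)^{\dim \sigma} Ind_{G_\sigma}^G \mathbb{F}_p\Bigr) - \mathbb{F}_p$$
and compute the effect of the fixed-point functor on each transitive permutation summand. The $Q$-fixed points of $\mathbb{F}_p[G/G_\sigma]$ are spanned by the cosets $gG_\sigma$ with $Q \leq gG_\sigma g^{-1}$, and as an $N_G(Q)$-set these correspond precisely to the $Q$-fixed simplices lying in the $G$-orbit of $\sigma$. Regrouping by $N_G(Q)$-orbit, the alternating sum collapses to
$$\Bigl(\sum_{\tau \in \Delta^Q}(-1)^{\dim \tau} Ind_{(N_G(Q))_\tau}^{N_G(Q)} \mathbb{F}_p\Bigr) - \mathbb{F}_p = \widetilde{L}_{N_G(Q)}(\Delta^Q;\mathbb{F}_p),$$
so that $\widetilde{L}_G(\Delta;\mathbb{F}_p)^Q$ and $\widetilde{L}_{N_G(Q)}(\Delta^Q;\mathbb{F}_p)$ agree as virtual $\mathbb{F}_pN_G(Q)$-modules up to summands whose vertex is a proper subgroup of $Q$ (which are invisible to the vertex-$Q$ count).

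The main obstacle is a bookkeeping one: to pass from the single-module Burry-Carlson-Puig statement to the alternating-sum setting, one must verify that the signed cancellations occurring among indecomposable summands appearing in different degrees of the chain complex on the $G$-side are matched by the corresponding cancellations on the $N_G(Q)$-side, so that a vertex-$Q$ indecomposable appearing with multiplicities $a$ in even-degree chains and $b$ in odd-degree chains is counted as $a-b$ on both sides simultaneously. This bookkeeping is exactly what Robinson arranges in the reformulation given in \cite{rob88}; combining it with the geometric identification of $Q$-fixed points described above yields the asserted equality of indecomposable-summand counts.
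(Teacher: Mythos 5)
The paper itself offers no proof of this statement --- it is quoted from Robinson \cite{rob88} with only the remark that the argument uses the Green correspondence --- so your proposal has to stand on its own, and while its skeleton is the standard one, the central mechanism is wrong as written. You conflate the fixed-point module $M^Q$ with the span of the $Q$-fixed basis vectors. For a permutation module $\mathbb{F}_p[\Omega]$ the $Q$-fixed subspace is spanned by \emph{all} $Q$-orbit sums, so $(\mathbb{F}_p[G/G_\sigma])^Q$ is the permutation module of $N_G(Q)$ on the whole orbit set $(G/G_\sigma)/Q$, not merely on the cosets fixed pointwise by $Q$. The extra piece is the permutation module on the nontrivial $Q$-orbits, whose stabilizers in $N_G(Q)$ all contain $Q$; its indecomposable summands can perfectly well have vertex $Q$, so the parenthetical claim that the discrepancy is ``invisible to the vertex-$Q$ count'' because it involves only proper subgroups of $Q$ is unjustified and false in general. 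For the same reason your opening formulation of Burry--Carlson--Puig is not that theorem (which concerns $\mathrm{Res}^G_{N_G(Q)}M$, not $M^Q$) and is false: for $G=Q$ cyclic of order $p$ and $M=\mathbb{F}_pQ$ the regular module, $M$ is projective (vertex $1$) while $M^Q$ is the trivial module, of vertex $Q$. The object for which your geometric identification $\mathbb{F}_p[\Omega]\mapsto\mathbb{F}_p[\Omega^Q]$ is actually valid is the Brauer quotient $M(Q)=M^Q\big/\sum_{R<Q}\mathrm{tr}_R^Q(M^R)$, and the input you need is Brou\'e's correspondence for $p$-permutation modules: the number of vertex-$Q$ indecomposable summands of $M$ equals the number of vertex-$Q$ summands of $M(Q)$ as an $N_G(Q)$-module (equivalently, of projective summands over $N_G(Q)/Q$).

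Once fixed points are replaced by the Brauer construction throughout, the rest of your argument does go through: $(-)(Q)$ is additive on direct sums, sends $\mathbb{F}_p[G/G_\sigma]$ to the $N_G(Q)$-permutation module on the $Q$-fixed simplices in the $G$-orbit of $\sigma$, and sends the trivial module to itself, so it carries $\widetilde{L}_G(\Delta;\mathbb{F}_p)$ to $\widetilde{L}_{N_G(Q)}(\Delta^Q;\mathbb{F}_p)$ exactly, with no error term to dispose of. The ``bookkeeping'' you defer to Robinson in your last paragraph --- which is circular in a blind proof --- then amounts to nothing more than noting that both counts are signed multiplicities in the basis of indecomposable $p$-permutation modules of the Green ring, hence additive, so the equality may be verified one transitive permutation summand at a time via Brou\'e's correspondence.
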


The proof of this result uses the Green correspondence, and the relationship to the Brauer correspondence permits a conclusion regarding the blocks in which the summands lie.

In what follows we will use this theorem in order to determine the vertices of the reduced Lefschetz module $\widetilde{L}_G(\Delta;\mathbb{F}_2)$ for the Benson complex of one of the six sporadic simple groups discussed in Section $2.1$. We start with a more general result regarding the nature of the fixed point sets under the action of $p$-central elements:

\begin{prop} Let $\Delta$ denote the Benson complex for a group $G$, and let $P$ be any group in the Benson collection. Then the fixed point set $\Delta^P$ is contractible. Further, the vertices of the indecomposable summands of $\widetilde{L}_G(\Delta; \mathbb{F}_p)$ do not contain any $p$-central elements.
\end{prop}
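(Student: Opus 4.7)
The plan is to establish the two claims in sequence: first show contractibility of $\Delta^P$ for any $P$ in the Benson collection by constructing an explicit poset contraction, then use Robinson's theorem (Theorem $2.7$) together with an equivariance observation to rule out vertices containing $p$-central elements.

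For the contractibility statement, I would introduce two poset maps $f_1, f_2 : \Delta^P \to \Delta^P$ defined by $f_1(A) = A \cap C_G(P)$ and $f_2(A) = P \cdot (A \cap C_G(P))$. Since $P$ normalizes each $A \in \Delta^P$ and $P$ is a nontrivial $p$-group acting on the elementary abelian $p$-group $A$, the fixed-point subgroup $C_A(P) = A \cap C_G(P)$ is nontrivial; as a subgroup of $A$ it remains in the Benson collection. Because $P$ centralizes $C_A(P)$ and both groups are purely $p$-central elementary abelian, the closure property of the Benson collection guarantees that $f_2(A) = P \cdot C_A(P)$ is again in the Benson collection. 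The evident pointwise chain of inequalities
$$A \;\geq\; f_1(A) \;\leq\; f_2(A) \;\geq\; P$$
combined with the standard poset homotopy lemma \cite[2.2(3)]{gs} yields $\mathrm{id} \simeq f_1 \simeq f_2 \simeq \mathrm{const}_P$, so $\Delta^P$ deformation-retracts onto its vertex $P$.

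For the vertex statement, suppose $Q$ is a $p$-subgroup of $G$ containing a $p$-central element $z$. Then $\langle z\rangle$ lies in the Benson collection and is normal in $Q$, so $Q \leq N_G(\langle z\rangle)$. I would verify that, with $P = \langle z \rangle$, the maps $f_1$ and $f_2$ constructed above are actually $N_G(\langle z\rangle)$-equivariant: any element normalizing $\langle z\rangle$ sends $z$ to a generator $z^i$ and thus normalizes $C_G(z) = C_G(z^i)$, while the constant value $\langle z\rangle$ is itself fixed by $N_G(\langle z\rangle)$. Restricting the equivariant contraction to $Q$-fixed points shows that $\Delta^Q = (\Delta^{\langle z\rangle})^Q$ is contractible, so its reduced Lefschetz module vanishes. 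Theorem $2.7$ then forces the number of indecomposable summands of $\widetilde{L}_G(\Delta;\mathbb{F}_p)$ with vertex $Q$ to equal the number of summands of $\widetilde{L}_{N_G(Q)}(\Delta^Q;\mathbb{F}_p)$ with that vertex, and both are zero.

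The main subtlety will be verifying that the commuting product $P \cdot C_A(P)$ really does belong to the Benson collection; this is precisely where the closure of the $p$-central elements under commuting products is used, and is the essential reason for working with the Benson framework rather than the full Quillen complex. Once that point is pinned down, the poset-map argument and the equivariance check are essentially formal.
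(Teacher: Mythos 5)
Your contractibility argument is exactly the paper's: the zigzag $A \geq A\cap C_G(P) \leq P\cdot(A\cap C_G(P)) \geq P$ of equivariant poset maps is precisely the string the authors write down, and your justifications (nontriviality of the fixed subgroup of a $p$-group acting on a $p$-group, closure of the Benson collection under subgroups and under commuting products with $P$) are the right ones. That half is fine.

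The second half has a genuine gap. You write that if $z\in Q$ is $p$-central then ``$\langle z\rangle$ \ldots is normal in $Q$, so $Q\leq N_G(\langle z\rangle)$.'' This is false: a subgroup of order $p$ generated by an arbitrary element of a $p$-group need not be normal. For instance, in an extraspecial group $p_+^{1+2}$ the only normal subgroup of order $p$ is the center, yet (as in $M_{12}$ at $p=3$) such a Sylow subgroup contains many $p$-central elements of $G$ outside its center. More seriously, the proposition must exclude as vertices \emph{all} $p$-subgroups $Q$ meeting the class of $p$-central elements, including those $Q$ for which no $p$-central element of $G$ lies in $Z(Q)$ (i.e., $Q$ is not ``distinguished'' in the sense of Section 3) -- for such $Q$ your restriction-of-an-equivariant-contraction argument has no starting point, since $Q$ does not act on $\Delta^{\langle z\rangle}$ at all. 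The paper circumvents this with Smith theory: from the contractibility of $\Delta^{\langle z\rangle}$ one deduces, by induction on $|Q|$ through a chain of index-$p$ normal subgroups (at each stage a group of order $p$ acts on a finite-dimensional mod-$p$ acyclic complex), that $\Delta^Q$ is mod-$p$ \emph{acyclic} -- not necessarily contractible -- for every $p$-group $Q$ containing $z$; this acyclicity is enough to kill the count of summands with vertex $Q$ via Theorem 2.7. Your argument does establish the conclusion in the special case $\langle z\rangle\trianglelefteq Q$ (in particular when $z\in Z(Q)$), but to cover the general case you need to replace the equivariance/restriction step with the Smith theory step.
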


\begin{proof} A {\it collection} $\mathcal{C}$ is a $G$-poset which is closed under the conjugation action of $G$. The order complex $\Delta(\mathcal{C})$ of $\mathcal{C}$ is the simplicial complex with vertex set $\mathcal{C}$ and simplices proper inclusion chains in $\mathcal{C}$. In what follows, $\mathcal{C}$ will denote the {\it Benson collection}; this is the set of $p$-subgroups of $G$ which correspond to vertices in the Benson complex.

Let $\mathcal{C}_0^P= \lbrace Q \in \mathcal{C} | P \leq C_G(Q) \rbrace$, a subposet of $\mathcal{C}^P = \lbrace Q \in \mathcal{C} | P \leq N_G(Q) \rbrace$. Define a poset map
$F: \mathcal{C}^P \rightarrow \mathcal{C}^P$ by $F(Q)=Q \cap C_G(P)$, a nontrivial group since it equals the set of elements of the $p$-group $Q$ fixed under the action of the $p$-group $P$. Since $F(Q) \leq Q$, we have homotopy equivalence between $\Delta(\mathcal{C}^P)$ and the corresponding image under $F$, which equals $\Delta(\mathcal{C}_0^P)$. If $P \leq C_G(Q)$, then $PQ$ is also a group in the Benson collection. Then $\Delta(\mathcal{C}_0^P)$ is conically contractible via $Q \leq PQ \geq P$.  These two stages of the proof can be combined in the string, representing equivariant poset maps: $ Q \geq C_Q(P) \leq P \cdot C_Q(P) \geq P.$

Let $z$ be a $p$-central element in $G$, then $\langle z \rangle$ belongs to the Benson collection. The contractibility of $\Delta^z$ implies that $\Delta ^Q$ is mod-$p$ acyclic for any $p$-group $Q$ containing $z$ (by Smith theory), and thus the reduced Lefschetz module $\widetilde{L}_{N_G(Q)}(\Delta ^Q;\mathbb{F}_p)=0$. Apply Theorem $2.7$ to conclude that the vertices of the indecomposable summands of $\widetilde{L}_G(\Delta; \mathbb{F}_p)$ do not contain any $p$-central elements.
\end{proof}

\begin{cor} With $G$ one of the sporadic simple groups $M_{12}, J_2, HS, Co_3, Ru, Suz$, the vertices of the indecomposable summands of $\widetilde{L}_G(\Delta;\mathbb{F}_2)$ do not contain any $2$-central involutions.
\end{cor}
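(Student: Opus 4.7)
The plan is to recognize that Corollary 2.9 is the direct specialization of Proposition 2.8 to $p=2$ applied to the six sporadic groups of Theorem 2.1. The only thing to check is that the Benson complex $\Delta$ used in the statement actually fits the framework of Proposition 2.8, which it does by construction: throughout Section 2.1, $\Delta$ has been fixed as the Benson complex of $G$ for each of the six groups, and Lemma 2.2 identifies it (up to equivariant homotopy) with the standard $2$-local geometry used in the rest of the paper.

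Concretely, I would simply invoke Proposition 2.8 with $p=2$. That proposition asserts that no vertex of an indecomposable summand of $\widetilde{L}_G(\Delta;\mathbb{F}_p)$ can contain a $p$-central element of $G$. For $p=2$ every element of order $2$ is by definition an involution, so ``$2$-central element'' and ``$2$-central involution'' coincide, and the statement of Corollary 2.9 follows for each $G \in \{M_{12}, J_2, HS, Co_3, Ru, Suz\}$.

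There is essentially no obstacle here: all of the real work has already been done in Proposition 2.8, whose proof combines the contractibility of $\Delta^{\langle z \rangle}$ for a $2$-central involution $z$ (via the equivariant poset map string $Q \geq C_Q(P) \leq P\cdot C_Q(P) \geq P$), Smith theory (to propagate mod-$2$ acyclicity from $\Delta^{\langle z \rangle}$ to $\Delta^Q$ for any $2$-subgroup $Q$ containing $z$), and Robinson's theorem (Theorem 2.7) identifying vertices of summands of $\widetilde{L}_G(\Delta;\mathbb{F}_p)$ via Lefschetz modules on fixed complexes. The only thing specific to the six sporadic groups is the hypothesis that the $2$-central involutions form a closed collection under commuting products, which is precisely the property noted at the start of Section 2.1 and which makes the Benson complex well defined for these groups; hence Proposition 2.8 applies verbatim and yields the corollary.
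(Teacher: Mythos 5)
Your proposal is correct and matches the paper exactly: Corollary 2.9 is stated without separate proof precisely because it is the immediate specialization of Proposition 2.8 to $p=2$ for the Benson complex $\Delta$ of each of the six groups, which is what you do. The only cosmetic quibble is that the closure property of the $2$-central involutions is not needed for Proposition 2.8 to apply (that proposition holds for the Benson complex of any group); it only serves elsewhere to identify the Benson complex with chains of purely central elementary abelian subgroups.
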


\begin{rem} For $G = Co_3$, the result was proved in \cite[Theorem 2]{mgo1} using a different approach.
\end{rem}

In what follows, we need only to consider those $2$-groups $Q$ which are purely noncentral. For five of the six groups discussed in this section ($M_{12}, J_2, HS, Ru$ and $Co_3$) the square of any element of
order $4$ is a $2$-central involution. Therefore the purely noncentral $2$-groups are the purely noncentral elementary abelian $2$-groups. These are classified in \cite[Theorems 1.3 and 1.4]{amm}, \cite[Lemmas 3.3 and 3.4]{finr73}, \cite[Table 2]{magl71}, \cite[Section 2.5]{wi83}, \cite[Lemma 5.10]{fin73}.

Now consider the Suzuki group $G=Suz$. There are two classes of involutions, denoted $2A$ (2-central) and $2B$ (noncentral). The centralizer of a noncentral involution $t_0=2B$ is $C_G(t_0)=(2^2 \times L_3(4)):2$. There are four classes of elements of order $4$, and the class $4D$ has its square equal to a noncentral involution $2B$. The maximal purely noncentral elementary abelian $2$-subgroups have order four, and there are $3$ classes of such subgroups, denoted $V_1, V_2$ and $V_3$ in \cite{wi83}. Their centralizers are $C_G(V_1)=2^2 \times L_3(4)$, $C_G(V_2)=2^2 \times 2^{2+4}$, and $C_G(V_3)=2^2 \times 3^2:Q_8$. Denoting $V_1=\langle t_0, t_1 \rangle$, we have $V_2 = \langle t_0, t_1 y \rangle$ with $y$ an involution in $L_3(4)$, and $V_3= \langle t_0, z \rangle$ for $z$ an involution in $C_G(t_0) \setminus C_G(V_1)$. Acting as an outer automorphism on $L_3(4)$, $z$ centralizes $PSU_3(2)=3^2:Q_8$. The element $t_1z$ is of type $4D$, and $D_8=\langle t_0, t_1,z \rangle$ is the dihedral group of order $8$.\\
Let $Q \leq G$ be a purely noncentral $2$-group, and denote $E =\Omega _1 (Z(Q))$. Thus $E$ is a purely noncentral elementary abelian $2$-group and $Q \leq C_G(E)$. If $E \simeq 2^2$ has order four, then $Q =E$ since every involution of $L_3(4)$ is $2$-central in $G$. Assume $E=2=\langle t_0 \rangle$; then $Q \cap C_G(V_1)$ equals either $E, V_1$, or $V_2$. Either $Q \leq C_G(V_1)$, or $Q$ is isomorphic to some extension of the form $2.2$ or $2^2.2$. In the first case, $Q$ is either $4=\langle 4D \rangle$ or $Q=2^2=V_3$. In the second case $Q$ is dihedral since there are no purely noncentral $2^3$, and $\Omega _1 (Z (2 \times 4))=2^2$. Up to conjugacy, the dihedral group contains $V_3 = \langle t_0, z \rangle$, and either $Q = D_8= \langle t_0, t_1, z \rangle$ or $Q = D_8^* = \langle t_0, t_1y,z \rangle$.

The chart in Table $2.2$ lists the conjugacy classes of purely noncentral $2$-subgroups in each of the six sporadic groups (second column) together with their normalizers (third column). The fixed point sets (given in the fourth column) are computed using $\Delta ^Q = (\Delta ^t)^Q$ for $t$ an involution in $Z(Q)$. In most cases $\Delta ^Q$ is homotopy equivalent to a building and the reduced Lefschetz module is the associated Steinberg module, or an ``extended" Steinberg module; see \cite{schmid92}. The case of $2 \times M_{12} \leq Co_3$
will require more discussion, and we need to know the projective summands of the corresponding reduced Lefschetz module for $M_{12}$.

\begin{thm}(a) Let $G$ be either $M_{12}$, $J_2$, $HS$, or $Ru$. Then the reduced Lefschetz module $\widetilde{L}_G(\Delta; \mathbb{F}_2)$ has precisely one nonprojective summand, and that summand has vertex $2^2$ and lies in a block with defect group $2^2$.\\
(b) Let $G = Suz$. Then $\widetilde{L}_G(\Delta; \mathbb{F}_2)$ has precisely one nonprojective summand, and that summand has vertex $D_8$ and lies in a block with defect group $D_8$.\\
(c) Let $G=Co_3$. Then $\widetilde{L}_G(\Delta; \mathbb{F}_2)$ has either two or three nonprojective summands, all lying in a block with defect group $2^3$. One summand has vertex $2^3$. There is either one or two summands with vertex $2$.
\end{thm}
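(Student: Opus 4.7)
The plan is to apply Robinson's theorem (Theorem 2.7) conjugacy-class-by-class to 2-subgroups of $G$. By Corollary 2.9, only purely noncentral 2-subgroups $Q$ can be vertices of nonprojective indecomposable summands, so the list of classes to examine is short and given in Table 2.2. For each such $Q$, Table 2.2 supplies $N_G(Q)$ and the equivariant homotopy type of $\Delta^Q = (\Delta^t)^Q$ (for $t \in Z(Q)$, via Theorem 2.1); one then analyzes the reduced Lefschetz module $\widetilde{L}_{N_G(Q)}(\Delta^Q;\mathbb{F}_2)$ and counts its indecomposable summands with vertex exactly $Q$. The work splits into a uniform ``Steinberg-module'' argument handling (a) and (b), and a recursive analysis for (c) that feeds on the result of (a) applied to $M_{12}$.

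For parts (a) and (b), Theorem 2.1 identifies $\Delta^t$ with the standard 2-local geometry of the component $H$ (respectively its enlargement $\overline{H} = S_6 = Sp_4(2)$ for $HS$), which in each case is a finite group of Lie type in characteristic $2$; its Benson/Quillen complex is then the Tits building and the reduced Lefschetz module is the Steinberg module of that Lie group, projective over it. For a maximal purely noncentral $Q$---either $2^2$ (for $M_{12}, J_2, HS, Ru$) or $D_8$ (for $Suz$, from the $V_3$-plus-outer-involution construction described before the theorem)---the subcomplex $\Delta^Q$ is itself a (possibly smaller) building on which $N_G(Q)/Q$ acts, and its Lefschetz module is an (extended) Steinberg module of the relevant Levi-type quotient, inflated through $N_G(Q) \twoheadrightarrow N_G(Q)/Q$. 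This inflated module is relatively $Q$-projective and indecomposable with vertex exactly $Q$, and by the Brauer correspondence built into Robinson's theorem it lies in a single 2-block of defect group $Q$. For proper purely noncentral subgroups $Q' < Q$, in particular $Q' = \langle t\rangle$, the corresponding Lefschetz module is either acyclic or already $Q$-projective, so its Robinson count at the smaller vertex $Q'$ vanishes. Summing contributions yields the unique nonprojective summand claimed in (a) and in (b).

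For (c) with $G = Co_3$, the centralizer $C_G(t) = 2 \times M_{12}$ acts on $\Delta^t$ through the $M_{12}$-action on its own standard 2-local geometry, with $\langle t\rangle$ acting trivially. By part (a) applied to $M_{12}$, the module $\widetilde{L}_{M_{12}}$ has exactly one nonprojective summand $U$, with vertex a purely noncentral $2^2 \leq M_{12}$. Inflating $U$ through $C_G(t) \twoheadrightarrow M_{12}$ enlarges the vertex to $\langle t\rangle \times 2^2 \simeq 2^3$ (the maximal purely noncentral elementary abelian), and via Green correspondence this produces a $G$-summand of vertex $2^3$. The projective summands of $\widetilde{L}_{M_{12}}$, once inflated to $C_G(t)$, have vertex $\langle t\rangle$; applying Robinson's theorem at $Q = \langle t\rangle$ then matches them to either one or two $G$-summands with that vertex, the ambiguity arising from whether these inflated modules remain indecomposable over $C_G(t)$ under the further identifications forced by the surrounding $Co_3$-structure. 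All resulting summands lie in a common 2-block with defect group $2^3$, because each of their vertices embeds into this maximal purely noncentral elementary abelian and Brauer correspondence preserves defect groups. The main obstacle is precisely this last count: tracing the summands of $\widetilde{L}_{M_{12}}$ through inflation and Green correspondence to determine how many remain indecomposable at the $C_G(t)$-level, which is exactly what the ``either two or three'' in the statement records.
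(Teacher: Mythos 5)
Your treatment of parts (a) and (b) follows the paper's route and is essentially sound: restrict to purely noncentral $2$-subgroups via Corollary 2.9, read off $\Delta^Q=(\Delta^t)^Q$ class by class from Table 2.2, identify each nontrivial $\widetilde{L}_{N_G(Q)}(\Delta^Q;\mathbb{F}_2)$ as an (extended) Steinberg module whose vertex is the maximal purely noncentral subgroup, and apply Theorem 2.7, so that only the maximal class $Q=2^2$ (resp.\ $D_8$) has Robinson count one and all smaller classes have count zero. Part (c), however, has two genuine gaps. First, you obtain the vertex-$2^3$ summand by inflating the nonprojective $M_{12}$-summand to $C_G(t)=2\times M_{12}$ and invoking ``Green correspondence.'' But Theorem 2.7 counts vertex-$Q$ summands of $\widetilde{L}_G$ by examining $\widetilde{L}_{N_G(Q)}(\Delta^Q)$ over $N_G(Q)$, and $N_{Co_3}(2^3)=(2^3\times S_3).(7{:}3)$ is \emph{not} contained in $C_G(t)$ (its $7{:}3$ permutes the seven involutions of the $2^3$ transitively), so the Green correspondence you appeal to is not available from $C_G(t)$. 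The direct argument is the $Q=2^3$ row of Table 2.2: $\Delta^{2^3}$ is the $L_2(2)$ building, whose Lefschetz module over $N_G(2^3)$ is indecomposable with vertex $2^3$. You should also dispose of the class $Q=2^2$ in $Co_3$, whose fixed point set is the $L_2(4)$ building with Lefschetz module of vertex $2^3\neq Q$, hence Robinson count zero.

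Second, and more seriously, the ``one or two'' count at vertex $2$ is left unproved. That count is precisely the number of \emph{projective} summands of $\widetilde{L}_{M_{12}}(\Delta;\mathbb{F}_2)$: a projective indecomposable $M_{12}$-summand inflates to an indecomposable $2\times M_{12}$-summand with vertex $\langle t\rangle$, and these are the only summands of $\widetilde{L}_{C_G(t)}(\Delta^t)$ with that vertex. You instead attribute the ambiguity to whether inflated modules ``remain indecomposable under identifications forced by the surrounding $Co_3$-structure,'' which is not the issue, and you give no bounds. The needed content is the paper's Proposition 2.12: a dimension count shows that only $P_{M_{12}}(\varphi_6)$ (with $\dim\varphi_6=144$) can occur as a projective summand; Robinson's double-coset inequality applied to $Ind_P^{M_{12}}(1)$ caps its multiplicity at two; and Landrock's criterion (checked by a GAP computation on a double coset and the class of elements of order $11$) shows $Ind_P^{M_{12}}(1)$ is not projective-free, giving the lower bound of one. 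Without an argument of this kind your proof establishes neither the existence of a vertex-$2$ summand nor the upper bound of two.
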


\begin{proof} The results for part (a) and part (b) follow from Theorem $2.7$ and the information gathered in Table $2.2$. The only case
that requires further discussion is the summands with vertex $2$ for $G=Co_3$ in part (c); this case follows from the next Proposition $2.12$ concerning projective summands of the reduced Lefschetz module for $M_{12}$.
\end{proof}

\begin{center}
{\small
\begin{tabular}{|c|c|l|l|l|}
\hline
$G$ & $Q$ & $N_G(Q)$ &$\Delta^Q$  & vertex of \\
 &  &  & up to homotopy & $\widetilde{L}_{N_G(Q)}(\Delta^Q; \mathbb{F}_p)$\\
\hline\hline
$M_{12}$ & $2$ & $2 \times S_5$ & $5$ points $= L_2(4)$ building & $2^2$ \\
         & $2^2$& $A_4 \times S_3$ & $3$ points $= L_2(2)$ building & $2^2=Q$ \\
\hline
$J_2$ & $2$ & $2^2 \times A_5$ & $5$ points $= L_2(4)$ building & $2^2$ \\
      & $2^2$ & $A_4 \times A_5$ & $5$ points $= L_2(4)$ building & $2^2=Q$ \\
      & $2^2$ & $2^4:3$ & contractible &  \\
\hline
$HS$ & $2$ & $2 \times A_6.2^2$ & $Sp_4(2)$ building & $2^2$ \\
     & $2^2$ & $2^2 \times 5:4$ & $Sz(2)$ building & $2^2=Q$ \\
\hline
$Ru$ & $2$ & $2^2 \times Sz(8)$ & $65$ points $= Sz(8)$ building & $2^2$ \\
     & $2^2$ & $(2^2 \times Sz(8)):3$ & $65$ points $= Sz(8)$ building & $2^2=Q$ \\
\hline
$Co_3$& $2$ & $2 \times M_{12}$ & geometry for $M_{12}$ & $2^3$ and $2=Q$ \\
      & $2^2$ & $A_4 \times S_5$ & $5$ points $= L_2(4)$ building & $2^3$ \\
      & $2^3$ & $(2^3 \times S_3).(7:3)$ & $3$ points $= L_2(2)$ building& $2^3=Q$ \\
\hline
$Suz$ & $2$ & $(2^2 \times L_3(4)).2$ & $L_3(4)$ building & $D_8$ \\
      & $2^2$ & $(A_4 \times L_3(4)).2$ & $L_3(4)$ building & $D_8$ \\
      & $2^2$ & $2^2 \times 2^{2+4}$ & contractible & \\
      & $2^2$ & $D_8 \times 3^2:Q_8$ & $9$ points $= PSU_3(2)$ building & $D_8$ \\
      & $4$ & $D_8 \times 3^2:Q_8$ & $9$ points $=PSU_3(2)$ building & $D_8$ \\
      & $D_8$ & $D_8 \times 3^2:Q_8$ & $9$ points $= PSU_3(2)$ building & $D_8=Q$ \\
      & $D_8^*$ & $D_8^* \times 2$ & contractible &  \\
\hline
\end{tabular}}\\

\vspace*{.4cm}
{\it Table 2.2: Fixed point sets and vertices of $\widetilde{L}_G(\Delta; \mathbb{F}_p)$}
\end{center}

\begin{prop} For the sporadic simple group $M_{12}$, the reduced Lefschetz module $\widetilde{L}_{M_{12}}(\Delta; \mathbb{F}_2)$ associated to the Benson complex has either one or two projective summands.
\end{prop}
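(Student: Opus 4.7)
The plan is to pin down $\widetilde{L}_{M_{12}}(\Delta;\mathbb{F}_2)$ by computing its Brauer character directly from the $2$-local geometry and then separating its unique nonprojective summand (supplied by Theorem 2.11(a)) from a projective remainder whose indecomposable decomposition can be read off the $2$-modular character table of $M_{12}$.

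First I would replace $\Delta$ by the equivariantly homotopy equivalent standard $2$-local geometry $\Gamma$ of $M_{12}$, a bipartite graph with vertex stabilisers $P_1=2^{1+4}_+.S_3$, $P_2=4^2{:}(2\times S_3)$ and edge stabiliser $B=P_1\cap P_2$. The reduced Lefschetz module then has virtual character
$$\chi_{\widetilde{L}}=\chi_{M_{12}/P_1}+\chi_{M_{12}/P_2}-\chi_{M_{12}/B}-1,$$
where each $\chi_{M_{12}/H}$ is the permutation character on cosets of $H$. Evaluating this on the six $2$-regular classes $1A,\,3A,\,3B,\,5A,\,11A,\,11B$ of $M_{12}$ produces the Brauer character of $\widetilde{L}_{M_{12}}(\Delta;\mathbb{F}_2)$ explicitly.

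Next, Theorem 2.11(a) gives the splitting $\widetilde{L}_{M_{12}}=N\oplus P$ with $N$ the single nonprojective indecomposable summand (vertex $Q=2^2$, lying in a $2$-block with defect group $Q$) and $P$ projective. To obtain a Brauer character for $N$ I would apply Theorem 2.7 via Green correspondence: the correspondent of $N$ in $N_{M_{12}}(Q)=A_4\times S_3$ is the unique nonprojective summand of $\widetilde{L}_{A_4\times S_3}(\Delta^Q;\mathbb{F}_2)$. Since $\Delta^Q$ is the three-point $L_2(2)$-building on which $A_4\times S_3$ acts through its $S_3$-quotient, this correspondent is the inflation to $A_4\times S_3$ of the $2$-dimensional Steinberg module of $L_2(2)\cong S_3$. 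Its Brauer correspondent in $M_{12}$ is the $2$-block containing $N$, and pushing the local character formula across the Green correspondence determines the Brauer character of $N$ on each $2$-regular class of $M_{12}$.

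With $\chi_N$ in hand, the difference $\chi_P=\chi_{\widetilde{L}}-\chi_N$ is the Brauer character of a projective $\mathbb{F}_2[M_{12}]$-module, so it admits a unique expression $\sum_i c_i\,\chi_{P_{M_{12}}(\phi_i)}$ as a nonnegative integer combination of the characters of the six $2$-modular PIMs of $M_{12}$ listed on the Modular Atlas homepage. The number of projective summands of $\widetilde{L}_{M_{12}}(\Delta;\mathbb{F}_2)$ is then $\sum_i c_i$, and the final step is a direct numerical check that this sum equals either one or two. The main obstacle is the precise identification of $N$: distinct indecomposable modules with vertex $2^2$ in the relevant block can have Green correspondents that are very similar as $A_4\times S_3$-modules, and a small perturbation in $\chi_N$ can shift the resulting total $\sum_i c_i$ between one and two. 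The statement of Proposition 2.12 reflects exactly this residual ambiguity, so the proof only needs to narrow the count to this pair of values rather than resolve it completely.
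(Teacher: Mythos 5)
Your overall strategy is genuinely different from the paper's, but it has a gap at its central step. The paper never attempts to identify the nonprojective summand or its character; instead it analyzes the three induced modules $Ind_{P_1}^{M_{12}}(1)$, $Ind_{P_2}^{M_{12}}(1)$, $Ind_{P}^{M_{12}}(1)$ directly. The Robinson--Webb criterion (Lemma 2.13) forces any projective summand to be $P_{M_{12}}(\varphi_6)$ with $\varphi_6$ of dimension $144$; Robinson's double-coset inequality (Lemma 2.15), combined with GAP computations of the orders $|H\cap H^g|$, shows that $Ind_{P_1}$ and $Ind_{P_2}$ are projective-free and that $Ind_P$ contains at most two copies of $P_{M_{12}}(\varphi_6)$; and Landrock's criterion (Lemma 2.16) shows $Ind_P$ is not projective-free. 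The bound ``one or two'' falls out of these three estimates without ever knowing $\chi_N$.

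The gap in your argument is the claim that ``pushing the local character formula across the Green correspondence determines the Brauer character of $N$.'' The Green correspondence is a bijection between isomorphism classes of indecomposables with vertex $Q$ for $M_{12}$ and for $N_{M_{12}}(Q)=A_4\times S_3$; it does not transport Brauer characters. What it gives you is that $N$ is a direct summand of $Ind_{A_4\times S_3}^{M_{12}}\bigl(f(N)\bigr)$ with the complementary summands having vertices properly below $Q$ --- here, projective summands. So $\chi_N$ is only determined modulo the characters of an unknown projective module, which is exactly the quantity (a multiple of $\chi_{P_{M_{12}}(\phi_6)}$, as it turns out) that you are trying to count. Your computation is therefore circular: to extract the projective part of $\widetilde{L}$ you need $\chi_N$, and to extract $\chi_N$ from the induced module you need to know its projective part. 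Two further problems: the coefficients $c_i$ in the PIM decomposition of the projective part of a \emph{virtual} module need not be nonnegative --- Remark 2.17 shows the answer is $\widetilde{L}=S-2P_{M_{12}}(S)$, so the relevant coefficient is $-2$ and the count should be $\sum_i|c_i|$ --- and your closing assertion that the residual ambiguity in $\chi_N$ ``reflects exactly'' the one-or-two dichotomy is stated rather than proved; without enumerating the possible values of $\chi_N$ and checking that each forces a count in $\{1,2\}$, the proposition is not established. To repair the argument you would need independent upper and lower bounds on the projective part, which is precisely what the paper's three lemmas supply.
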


\begin{proof} Recall that, by Lemma $2.2$ the Benson complex and the $2$-local geometry of $M_{12}$ are equivariantly homotopy equivalent; therefore they have equal Lefschetz modules. The geometry $\Delta$ for $M_{12}$ is a graph with two orbits of vertices, stabilized by $P_1=2^{1+4}_+:S_3$ and $P_2=4^2:D_{12}$. An edge is stabilized by a Sylow $2$-subgroup $P$ of $G$, a group of order $64$. Then
$$\widetilde{L}_{M_{12}}(\Delta; \mathbb{F}_2) = Ind_{P_1}^{M_{12}}(1) + Ind_{P_2}^{M_{12}}(1) - Ind_P^{M_{12}}(1)-1.$$
We will apply three results on projective summands of induced modules.

The following result is due to Robinson \cite[Theorem 3]{rob89} and Webb \cite[Proposition 5.3]{wb87b}:

\begin{lem}[Robinson, Webb] The number of summands in $Ind_H^G(1)$ isomorphic to the projective cover $P_G(S)$ of a simple $\mathbb{F}_pG$-module $S$ equals the number of summands in $Res_H^G(S)$ isomorphic to $P_H(1)$.
\end{lem}

If $P_G(S)$ is a summand of $Ind_H^G(1)$, then dim($S$) must at least be the cardinality of a Sylow $p$-subgroup of $H$. The simple $\mathbb{F}_2M_{12}$-modules have dimensions $1, 10, 16, 16, 44$ and $144$. Therefore the only possible projective summand of $\widetilde{L}_{M_{12}}(\Delta; \mathbb{F}_2)$ is the projective cover of $\varphi_6=144$. Note $Ind_P^{M_{12}}(1)$ contains at most two summands isomorphic to $P_{M_{12}}(\varphi _6)$.

The next Lemma, due to Robinson \cite[Proposition 6]{rob89}, will give an upper bound on the number of projective summands of $\widetilde{L}_{M_{12}}(\Delta; \mathbb{F}_2)$.

\begin{lem}[Robinson] Let $H \leq G$, $g \in G$ and let $\lbrace S_i: 1\leq i \leq t \rbrace$ be a full set of isomorphism types of simple $\mathbb{F}_pG$-modules. The number of double cosets $HgH$ for which $p$ does not divide $|H \cap H^g|$ is at least $\sum_{i,j=1}^tc_{ij}m_im_j$, where $(c_{ij})$ is the Cartan matrix of $\mathbb{F}_pG$, and $m_i$ denotes the multiplicity of $P_G(S_i)$ as a summand of $Ind_H^G(1)$.
\end{lem}

With $H=P_1$ or $H=P_2$, there are eleven double cosets $HgH$ in $M_{12}$, but for all of them $|H \cap H^g|$ is even (GAP \cite{gap} computation). Therefore $Ind_{P_1}^{M_{12}}(1)$ and $Ind_{P_2}^{M_{12}}(1)$ contain no projective summands. For $H=P \in {\rm Syl}_2(M_{12})$, there are $44$ double cosets $PgP$, and $12$ of them satisfy $|P \cap P^g|=1$. This implies that $2m^2 \leq 12$, where $m$ is the multiplicity of the projective cover $P_{M_{12}}(\varphi _6)$ as a summand of $Ind_{P}^{M_{12}}(1)$. Therefore $\widetilde{L}_{M_{12}}(\Delta; \mathbb{F}_2)$ contains at most two projective summands.

The third result used in this proof is due to Landrock \cite[Theorem 2.3]{lan86}; this will give a lower bound on the number of projective summands of $\widetilde{L}_{M_{12}}(\Delta; \mathbb{F}_2)$.

\begin{lem}[Landrock] Let $P \in {\rm Syl}_p(G)$. Then $Ind_P^G(1)$ is projective-free if and only if for every element $g \in G$ with $|P \cap P^g|=1$ and for every $p$-section $C$ (the set of elements of $G$ whose $p'$-part lies in some fixed $p$-regular conjugacy class), the number of $P$-orbits of elements in $C \cap (PgP)$ is a multiple of $p$.
\end{lem}

Applied to $G = M_{12}$, we find that $Ind_{P}^{M_{12}}(1)$ is not projective-free. A computation using GAP \cite{gap} provides an example of such a double coset and conjugacy class (of elements of order $11$) with five $P$-orbits.

Thus we have shown that the reduced Lefschetz module for $M_{12}$ has either one or two projective summands. This implies that the reduced Lefschetz module for $Co_3$ has either one or two summands with vertex $2$.
\end{proof}

\begin{rem}In a private communication, Klaus Lux has shown that the reduced Lefschetz module $\widetilde{L}_{M_{12}}(\Delta; \mathbb{F}_2)$ has precisely two projective summands. This implies that the reduced Lefschetz module for $Co_3$ has two summands with vertex $2$. Applying Lemma $2.13$ for $G=M_{12}$, $H \in {\rm Syl}_2(M_{12})$, and $S$ the simple $\mathbb{F}_2 M_{12}$-module of dimension $144$, we want to compute the number of projective summands in $Res_H^{M_{12}}(S)$. Observe that the only projective indecomposable $\mathbb{F}_2H$-module is the regular representation $P_H(1)=\mathbb{F}_2H$. The element $e = \sum _{h \in H}h$ lies in the socle of the group ring $\mathbb{F}_2H$, which acts trivially on any nonprojective indecomposable $\mathbb{F}_2H$-module. For the regular representation, each $h \in H$ corresponds to a permutation matrix and $e$ acts as a matrix with every entry equal to one. Therefore the rank of $e$ acting on an $\mathbb{F}_2H$-module equals the number of projective summands. Using GAP, Lux computes the rank of $e$ acting on $Res_H^{M_{12}}(S)$ to be two.

Using other techniques involving GAP and MeatAxe, Lux also shows that
$$\widetilde{L}_{M_{12}}(\Delta; \mathbb{F}_2) = S - 2 P_{M_{12}}(S),$$
where $S$ has Brauer character $\varphi_6=144$.
\end{rem}

\section{Sporadic simple groups with small Sylow $p$-subgroups. Case $p$ odd}

In this section we are concerned with the sporadic simple groups $G$ which have a Sylow $p$-subgroup of order $p^3$, for $p$ an odd prime. In all these cases the Sylow $p$-subgroup is $p^{1+2}_+$, extraspecial of exponent $p$. A concise treatment of these groups can be found in the paper of Ruiz and Viruel \cite{rv04} on fusion and linking systems.

\subsection{The distinguished Bouc complex}A $p$-subgroup $Q$ is $p$-{\it radical} if it is the largest normal $p$-subgroup in its normalizer $N_G(Q)$. In addition, $Q$ is called {\it distinguished} if it contains $p$-central elements in its center. The {\it distinguished Bouc collection} $\widehat{\mathcal{B}}_p(G)$ contains the distinguished $p$-radical subgroups of $G$. We shall denote by $|\widehat{\mathcal{B}}_p(G)|$ the {\it distinguished Bouc complex}.

We determine the nonprojective indecomposable summands, as well as their distribution into the blocks of the group ring $\mathbb{F}_pG$, of the reduced Lefschetz modules $\widetilde{L}_G(|\widehat{\mathcal{B}}_p(G)|; \mathbb{F}_p)=\widetilde{L}_G(\widehat{\mathcal{B}}_p) $ of these sporadic simple groups. For a $p$-element $t$ of noncentral type we first find the fixed point set $\Delta^t$ and then we apply Theorem $2.7$ (Robinson's formulation of the Burry-Carlson Theorem).

The groups $Ru$ and $J_4$ (with $p=3$) and $Th$ (with $p=5$) have one class of elements of order $p$ each. The groups $McL$ (with $p=5$) and $O'N$ (with $p=7$) have local characteristic $p$; all the $p$-local subgroups $H$ satisfy the condition $C_H(O_p(H))\leq O_p(H)$. In all these cases, the distinguished Bouc collection equals the whole Bouc collection and therefore the reduced Lefschetz modules are projective. For a proof of the equality of the two collections in a group of local characteristic $p$ see \cite[Lemma 4.8]{mgo3}. Information on the $p$-radical subgroups for odd primes, for $G$ one of the sporadic simple groups, is given in \cite[Table 1]{y05b}.

The remaining sporadic groups with Sylow $p$-subgroups of order $p^3$ are discussed below.

\begin{thm} Let $t \in G$ be an element of order $p$ of noncentral type.
\begin{list}{\upshape\bfseries}
{\setlength{\leftmargin}{.5cm}
\setlength{\rightmargin}{0cm}
\setlength{\labelwidth}{1cm}
\setlength{\labelsep}{0.2cm}
\setlength{\parsep}{0.5ex plus 0.2ex minus 0.1ex}
\setlength{\itemsep}{.6ex plus 0.2ex minus 0ex}}

\item[a)]Let $G$ be either $M_{12}$ or $J_2$, and let $p = 3$.  Then the fixed point set $|\widehat{\mathcal{B}}_3(G)|^t$ consists of four contractible components, homotopy equivalent to the building for $L_2(3)$. The reduced Lefschetz module $\widetilde{L}_G(\widehat{\mathcal{B}}_3)$ contains precisely one nonprojective summand, which has vertex $3 = \langle t \rangle$ and lies in a block with
the same group as defect group.
$$\widetilde{L}_{M_{12}}(\widehat{\mathcal{B}}_3) = P_{M_{12}}(\phi_2)-P_{M_{12}}(\phi_3)-P_{M_{12}}(\phi_6)+
P_{M_{12}}(\phi_8)-P_{M_{12}}(\phi_9)-\phi_7-\phi_{11}.$$
These projective covers all lie in the principal block; $\chi_{14}=\phi_7+\phi_{11}$ is not projective and lies in the block with defect group $\langle 3B \rangle = \langle t \rangle$. This formula is valid in the Grothendieck ring of characters.
$$\widetilde{L}_{J_2}(\widehat{\mathcal{B}}_3) = P_{J_2}(\phi_{10}) + \phi_9.$$
This formula is valid in the Green ring of virtual modules.

\item[b)] Let $G$ be $Ru, HS, Co_3$ or $Co_2$, and let $p=5$.
The group $HS$ has two classes of noncentral elements, denoted $5B$ and $5C$. When $G = HS$, let $t$ be a noncentral element of type $5B$; the fixed point set $|\widehat{\mathcal{B}}_5(HS)|^{5C}$ is contractible. Then the fixed point set $|\widehat{\mathcal{B}}_5(G)|^t$ consists of six contractible components, homotopy equivalent to the building for $L_2(5)$. The reduced Lefschetz module $\widetilde{L}_G(\widehat{\mathcal{B}}_5)$ contains precisely one nonprojective summand, which has vertex $5 = \langle t \rangle$ and lies in a block with the same group as defect group.

\item[c)] Let $G$ be $He$ and let $p=7$.  The simple group of Held has five conjugacy classes
of elements of order 7, which generate three classes of groups, denoted $7AB$, $7C$(of central
type), and $7DE$.  The fixed point set of the last group $7DE$ is contractible. Let $t$ denote
an element of type $7A$ or $7B$. Then the fixed point set $|\widehat{\mathcal{B}}_7(He)|^t$ consists of eight contractible
components, homotopy equivalent to the building for $L_2(7)$.  The virtual module $\widetilde{L}_{He}(\widehat{\mathcal{B}}_7)$ contains
precisely one nonprojective summand, which has vertex $7AB = \langle t \rangle$ and lies in a block with
the same group as defect group.

\item[d)] Let $G$ be either $He$ or $M_{24}$, and let $p=3$.  Then the fixed point set
$|\widehat{\mathcal{B}}_3(G)|^t$ consists of $28$ contractible components, equivalent to the set of Sylow
subgroups $Syl_3(L_3(2))$.  $\widetilde{L}_G(\widehat{\mathcal{B}}_3)$ contains three nonprojective
summands, all having vertex $3 = \langle t \rangle$.  Two of these summands lie in one block,
with the same group as defect group, but the third summand lies in the principal block.

\item[e)] Let $G$ be $Fi'_{24}$ and let $p=7$.  Then the fixed point set
$|\widehat{\mathcal{B}}_7(Fi'_{24})|^t$ consists of $120$ contractible components, equivalent to the set of Sylow
subgroups $Syl_{7}(A_7)$. The reduced Lefschetz module $\widetilde{L}_{Fi'_{24}}(\widehat{\mathcal{B}}_7)$ contains five nonprojective summands, all having vertex $7 = \langle t \rangle$.  These five summands lie in four
blocks; four lie in blocks with the same group $7 = \langle t \rangle$ as defect group, but the fifth summand lies in the principal block.

\item[f)] Let $G$ be the Monster $M$ and let $p=13$. Then the fixed point set
$|\widehat{\mathcal{B}}_{13}(M)|^t$ consists of $144$ contractible components, equivalent to the set of Sylow
subgroups $Syl_{13}(L_3(3))$.  The virtual module $\widetilde{L}_M(\widehat{\mathcal{B}}_{13})$ contains three nonprojective summands, all having vertex $13 = \langle t \rangle$.  These three summands lie in three
different blocks; two lie in blocks with the same group $13 = \langle t \rangle$ as defect group, but the third summand lies in the principal block.
\end{list}
\end{thm}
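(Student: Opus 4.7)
The six parts (a)--(f) of Theorem 3.1 share a common skeleton, so I would prove them by a uniform strategy, specialising the input data case by case. The backbone is Robinson's reformulation of Burry--Carlson--Puig (Theorem 2.7): for each noncentral $p$-element $t$, the indecomposable summands of $\widetilde{L}_G(\widehat{\mathcal B}_p)$ with vertex $\langle t\rangle$ are in bijection with the indecomposable summands of the $N_G(\langle t\rangle)$-module $\widetilde{L}_{N_G(\langle t\rangle)}(|\widehat{\mathcal B}_p(G)|^t)$ having that same vertex. Hence the theorem reduces to two tasks: (i) determine the fixed point set $|\widehat{\mathcal B}_p(G)|^t$ up to equivariant homotopy; (ii) read off the Green/Brauer correspondents.

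For task (i) I would first extract, from \cite{rv04}, \cite[Section 5.3]{gls3} and \cite[Table 1]{y05b}, the full list of conjugacy classes of distinguished $p$-radical subgroups together with their normalisers. Since every such $Q$ contains a $p$-central element $z \in Z(Q)$, and since $t$ normalises $Q$ iff $Q$ appears as a vertex of $|\widehat{\mathcal B}_p(G)|^t$, I would attack the poset $\widehat{\mathcal B}_p(G)^t$ by the same kind of poset-map string used in Proposition 2.9, replacing $P$ by $\langle t\rangle$ but now \emph{restricted to one component at a time}. Concretely, once a Sylow $p$-subgroup $S$ of $C_G(t)/\langle t \rangle$ is fixed, the distinguished radicals $Q$ normalised by $t$ and ``visible from $S$'' contract onto the unique $p$-central subgroup they all contain. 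The enumeration of Sylow $p$-subgroups of $C_G(t)/\langle t\rangle$ (or a suitable simple quotient thereof) produces the component counts: $p+1=4$ or $6$ or $8$ in (a)--(c) via $L_2(p)$, and $|{\rm Syl}_p(L_3(2))|=28$, $|{\rm Syl}_7(A_7)|=120$, $|{\rm Syl}_{13}(L_3(3))|=144$ in (d)--(f). That $N_G(\langle t\rangle)$ permutes the components, with stabilizer equal to $N_G(\langle t,z\rangle)$ for the relevant $p$-central $z$, will follow from the structure of $C_G(t)$ listed in the Atlas.

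For task (ii), the unreduced Lefschetz module of a disjoint union of $k$ contractible components is the permutation representation of $N_G(\langle t\rangle)$ on the set of components, so the reduced Lefschetz module is this permutation module minus a trivial summand. Its indecomposable summands with vertex $\langle t\rangle$ are exactly what Theorem 2.7 transports back to $G$. The explicit Green correspondents, and hence the virtual identities stated for $M_{12}$ and $J_2$ in (a), come from matching Brauer characters on $p$-regular classes against the modular Atlas \cite{moc}; multiplicities of $\phi_i$ are pinned down by restricting to $N_G(\langle t\rangle)$ and comparing with the permutation character. Block membership follows from Brauer correspondence: a summand of vertex $\langle t\rangle$ lies in the $G$-block whose Brauer correspondent in $N_G(\langle t\rangle)$ is the block containing the matching summand downstairs. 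The defect group is $\langle t\rangle$ except when a larger defect group (e.g.\ $3^{1+2}_+$ for the principal block of $M_{12}$) forces a summand into the principal block, which accounts for the ``one in the principal block'' phenomenon in (a), (d), (e), (f).

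The main obstacle will be task (ii) for the groups in (d), (e), (f): showing that exactly one of the vertex-$\langle t\rangle$ summands lies in the principal block while the others live in blocks of defect $1$. A purely combinatorial count of components gives only the total virtual dimension; distinguishing the blocks requires matching the permutation module of $N_G(\langle t\rangle)$ on components against the Brauer characters of the nonprincipal blocks of defect $\langle t\rangle$ (which are well-understood because the defect group is cyclic of order $p$, so their Brauer trees are known), and then attributing the ``leftover'' indecomposable summand to the principal block. A secondary hurdle is verifying, for each case, that no distinguished $p$-radical outside the enumerated list is normalised by $t$; here a GAP computation of $\xi(x,y,z)$ as already used in the paper will confirm the fusion pattern needed to rule out extraneous radicals.
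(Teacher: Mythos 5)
Your overall strategy coincides with the paper's: reduce via Robinson's Theorem 2.7 to the Lefschetz module of $|\widehat{\mathcal{B}}_p(G)|^t$ over $N_G(\langle t\rangle)$, identify that fixed point set as a disjoint union of contractible stars indexed by the $p$-central subgroups of order $p$ inside $C_G(t)$ (equivalently ${\rm Syl}_p$ of the component of $C_G(t)$), and decompose the resulting permutation module ${\rm Ind}_N^H(1)-1$ into blocks of the component, transporting block membership by Brauer correspondence. However, there is a genuine gap: you never rule out vertices containing $p$-central elements. Theorem 2.7 only controls summands whose vertex is $\langle t\rangle$; to conclude ``precisely one (resp.\ three, five) nonprojective summands'' you must also show that no summand has a vertex meeting the $p$-central class, and that no purely noncentral subgroup of order $>p$ exists. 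The second point is easy, since $\Phi(p^{1+2}_+)=Z(p^{1+2}_+)$ forces every subgroup of order at least $p^2$ to contain the center. The first is the substantive one: the paper verifies (via class multiplication coefficients) that the $p$-central class is closed under commuting products, so that $|\widehat{\mathcal{B}}_p(G)|$ is $G$-homotopy equivalent to the Benson complex, whose fixed point set under a $p$-central $z$ is conically contractible (Proposition 2.8); Smith theory then kills $\widetilde{L}_{N_G(Q)}(\Delta^Q)$ for every $Q$ containing $z$. This step cannot be waved away --- Section 3.2 of the paper shows that for the subcomplex of $p$-centric radicals the $z$-fixed points are \emph{not} contractible and extra summands with $p$-central vertex genuinely appear.

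Two smaller inaccuracies. First, your contraction ``onto the unique $p$-central subgroup they all contain'' does not parse: $\langle z\rangle$ is usually not a vertex of $\widehat{\mathcal{B}}_p(G)$ (it is not radical, e.g.\ $\langle 3A\rangle$ in $M_{12}$); each component is instead a cone on its unique Sylow vertex, and the disjointness of the stars rests on the observation that if $t$ normalizes a non-Sylow distinguished radical $R$ then $\langle t\rangle R$ is the unique Sylow vertex of the fixed point set containing $R$. Second, the principal-block phenomenon does not occur in part (a): there the unique nonprojective summand lies in the block with cyclic defect group $\langle t\rangle$, and the principal-block constituents of the $M_{12}$ formula are projective. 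In (d)--(f) the phenomenon arises because the projective module $\widetilde{L}_{H}(\Delta^t)$ for the component $H$ of $N_G(\langle t\rangle)$ has an indecomposable summand lying in the principal block of $H$, whose Brauer correspondent upstairs is the principal block of $G$; your appeal to ``a larger defect group forcing'' the summand there should be replaced by this block-by-block bookkeeping, which is exactly the computation of ${\rm Ind}_{S_3}^{L_3(2)}(1)$, ${\rm Ind}_{7:3}^{A_7}(1)$ and ${\rm Ind}_{13:3}^{L_3(3)}(1)$ carried out in the paper.
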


\begin{proof}
As mentioned above these sporadic groups, with the given prime, have Sylow $p$-subgroups equal
to the extraspecial group of order $p^3$ and exponent $p$.  Except for Held at $p=7$ and
$HS$ at $p=5$, they have two conjugacy classes of elements of order $p$ (usually type A are $p$-central and type B are noncentral, although this notation is reversed for $Fi'_{24}$ at $p=7$ and $M$ at $p=13$).  The set of
$p$-central elements is closed under taking products of commuting elements.  Thus the distinguished
Bouc collection $\widehat{\mathcal{B}}_p(G)$ is homotopy equivalent to the Benson collection,
which consists of nontrivial purely $p$-central elementary abelian subgroups \cite[Theorems 3.1 and 4.4]{mgo3}. By Proposition $2.8$, the fixed point set $|\widehat{\mathcal{B}}_p(G)|^z$ of a $p$-central element $z$ is contractible and $\widetilde{L}_G(\widehat{\mathcal{B}}_p)$ has no summands with a vertex containing a $p$-central element.  We need only consider purely noncentral $p$-subgroups, and for these Sylow $p$-subgroups $p_+^{1+2}$ this implies that the only possible nontrivial vertex is a group of order $p$ of noncentral type.

Usually, the first step in computing the fixed point set $|\widehat{\mathcal{B}}_p(G)|^t$ is to find the number
of Sylow $p$-subgroups $S$ which satisfy $t \in N_G(S)$.  In most of these cases,
$N_G(S)$ is equal to $N_G(Z)$, where $Z=Z(S)$ is a group of order $p$ of central type.  But $t \in N_G(Z)$ iff $t \in C_G(Z)$ iff $Z \subseteq C_G(t)$.  So we want to identify the p-central elements of $G$ that lie in $C_G(t)$.

$a)$. $\mathbf{M_{12}}$ and $\mathbf{p=3}$\\
A $3$-local geometry for $M_{12}$, originally due to Glauberman, appears in papers by Ronan and Stroth \cite[Section 3]{rstr84} and Buekenhout \cite[Section 9]{bue82} and $\widehat{\mathcal{B}}_3(M_{12})$ is its barycentric subdivision. The $3$-radical subgroups of $M_{12}$ form four conjugacy classes \cite[Section 2]{ac95}, given by $3= \langle t \rangle$, two conjugacy classes $3A^2_I$ and $3A^2_{II}$ of purely $3$-central elementary abelian $3$-subgroups of rank $2$, and the Sylow $3$-subgroup $3_+^{1+2}$. The last three are distinguished; thus $|\widehat{\mathcal{B}}_3(M_{12})|$ is a graph with three types of vertices. Given a Sylow $3$-subgroup $S$, let $Z=Z(S)$ be its center. Then $N_{M_{12}}(S)=N_{M_{12}}(Z)=3_+^{1+2}.2^2$ and $C_{M_{12}}(Z)=3_+^{1+2}.2$. Then $S \in |\widehat{\mathcal{B}}_3(M_{12})|^t$ iff $t \in N_{M_{12}}(S)$ iff $t \in C_{M_{12}}(Z)$ iff $Z \leq C_{M_{12}}(t)$. But $C_{M_{12}}(t) \simeq 3 \times A_4$ contains four $3$-central subgroups of order $3$ (eight elements of type $3A$) \cite[Section 3]{wh66}, and so $|\widehat{\mathcal{B}}_3(M_{12})|^t$ contains four vertices of type $3_+^{1+2}$. The element $t$ will also normalize each $3A^2 \leq 3_+^{1+2}$ (the Sylow group contains two purely $3$-central $3A^2$), but two ``adjacent" Sylows will have an intersection $S_1 \cap S_2 = 3A^2$ containing no elements of type $3B$. If $3A^2 \in |\widehat{\mathcal{B}}_3(M_{12})|^t$ then $t \in N_{M_{12}}(3A^2)$ and $\langle t,3A^2 \rangle = 3_+^{1+2} \in |\widehat{\mathcal{B}}_3(M_{12})|^t$. Thus $|\widehat{\mathcal{B}}_3(M_{12})|^t$ has four components, each component equal to $2$ edges and $3$ vertices.

The normalizer $N_{M_{12}}(\langle t \rangle) \simeq S_3 \times A_4$ acts on the fixed point set $|\widehat{\mathcal{B}}_3(M_{12})|^t$ with the $S_3$ acting trivially and so that the action of $A_4$ is equivalent to the action of $L_2(3)$ on its building. Thus $\widetilde{L}_{A_4}(|\widehat{\mathcal{B}}_3(M_{12})|^t)$ is the Steinberg module (projective and irreducible), and $\widetilde{L}_{S_3 \times A_4}(|\widehat{\mathcal{B}}_3(M_{12})| ^t)$ is indecomposable with vertex $3 = \langle t \rangle$. This implies that the Lefschetz module $\widetilde{L}_{M_{12}}(\widehat{\mathcal{B}}_3)$ contains one indecomposable summand with vertex of type $\langle 3B \rangle$, and it lies in a block with the same group $\langle 3B \rangle$ as defect group.

Let $H_1$ and $H_2$ denote the two conjugacy classes of $3^2:GL_2(3)$ in $M_{12}$ (the normalizers of the two classes of subgroups $3A_I^2$ and $3A_{II}^2$), with intersection $H_1 \cap H_2 = H_{12} = 3^{1+2}_+:2^2$.  Then the reduced Lefschetz module is:
$$\widetilde{L}_{M_{12}}(\widehat{\mathcal{B}}_3)= Ind_{H_1}^{M_{12}}(1a)+ Ind_{H_2}^{M_{12}}(1a)-Ind_{H_{12}}^{M_{12}}(1a)-1a.$$
The characters for the first two terms are described in the Atlas; the third was computed with GAP,
using
$$Ind_{H_1}^{M_{12}} Ind_{H_{12}}^{H_1}(1a) = Ind_{H_1}^{M_{12}}(1a + 3b).$$
The character formula given above in Theorem $3.1(a)$ was obtained. Observe that the part of this formula that lies
in the principal block, being projective, is valid in the Green ring of virtual modules. There are only six
indecomposable modules lying in the block with the cyclic defect group $\langle 3B \rangle$: the
two simple modules $\phi_7$ and $\phi_{11}$; their projective covers, which are uniserial,
$P_{M_{12}}(\phi_7)= \langle \phi_7, \phi_{11}, \phi_7 \rangle$ and $P_{M_{12}}(\phi_{11})= \langle \phi_{11}, \phi_7, \phi_{11} \rangle$; and two other modules $M_1 = \langle \phi_7, \phi_{11} \rangle$ and $M_2 = \langle \phi_{11}, \phi_7 \rangle$. Unfortunately, $M_1$ and $M_2$ have the same factors and the same character. One of these must be the nonprojective indecomposable summand of the reduced Lefschetz module.

\smallskip
$\mathbf{J_2}$ and $\mathbf{p=3}$\\
The Hall-Janko-Wales group $J_2$ has two conjugacy classes of elements of order $3$, denoted $3A$ and $3B$. Their normalizers are $N_{J_2}(\langle 3A \rangle ) = 3.PGL_2(9) = 3.A_6.2$ and $N_{J_2}(\langle 3B \rangle )=S_3 \times A_4$. The Sylow subgroup contains two elements of type $3A$ (the center) and $24$ elements of type $3B$. The collection of elements of type $3A$ is closed (a GAP computation gives that the class multiplication coefficient $\xi(3A, 3A, 3B)=0$) and thus the Benson complex $\Delta$ consists of a disjoint set of $280$ vertices corresponding to the groups of type $\langle 3A \rangle$, and this Benson complex is equivariantly homotopy equivalent to $|\widehat{\mathcal{B}}_3(J_2)|$. To show that the fixed point set $\Delta^{3B}$ consists of four vertices, note that $3B \in N_{J_2}(\langle 3A \rangle)$ iff $3B \in C_{J_2} (3A)$ iff
$3A \in C_{J_2} (3B)=3 \times A_4$. The group $3 \times A_4$ contains four subgroups of type $\langle 3A \rangle$, namely ${\rm Syl}_3(A_4)$. These computations imply that the reduced Lefschetz module $\widetilde{L}_{J_2}(\widehat{\mathcal{B}}_3)$ has precisely one indecomposable summand with vertex $3= \langle 3B \rangle$ which lies in a block with the same group as defect group. Note that
$$\widetilde{L}_{J_2}(\widehat{\mathcal{B}}_3) = Ind_{3.A_6.2}^{J_2}(1a) - 1a = 63a+90a+126a =
\chi_7+\chi_{10}+\chi_{11}= \phi_9+P_{J_2}(\phi_{10})$$
is an ordinary module (not a virtual module). The module which affords $\chi_7=\phi_9$ is not projective, and lies in block 3 with defect group $\langle 3B \rangle$. The projective cover $P_{J_2}(\phi_{10})=\chi_{10}+\chi_{11}$ lies in block 2 with defect group $\langle 3A \rangle$. Since $\langle 3A \rangle$ is not a vertex, as $|\widehat{\mathcal{B}}_3(J_2)|^{3A}$ is contractible, the part of the reduced Lefschetz module lying in block 2
must be projective.  The entire formula given for $\widetilde{L}_{J_2}(\widehat{\mathcal{B}}_3)$ is valid on the
level of modules, not just characters.

\smallskip
$b)$. $\mathbf{Ru}$ and $\mathbf{p=5}$\\
The sporadic simple group of Rudvalis has two conjugacy classes of elements of order $5$, the $5$-central elements $5A$ and the noncentral elements $5B$. There are three conjugacy classes of radical $5$-subgroups, $5=\langle 5B \rangle$, a purely central $5^2 = 5A^2$, and the Sylow $5_+^{1+2}$; see \cite[Table 4]{ab02}. Therefore $|\widehat{\mathcal{B}}_5(Ru)|$ is a graph with two types of vertices. To compute the fixed point set $|\widehat{\mathcal{B}}_5(Ru)|^{5B}$, let $S = 5_+^{1+2} \in |\widehat{\mathcal{B}}_5(Ru)|^{5B}$ and denote by $Z$ the center $Z(S) = \langle 5A \rangle$.
Then $5B \in N_{Ru}(S) = N_{Ru}(Z) \simeq 5_+^{1+2} : ((4 \times 4 ) :2)$ iff
$Z = \langle 5A \rangle \leq  C_{Ru}(5B) \simeq 5 \times A_5$. The alternating group $A_5$ contains six subgroups of order $5$,
all of type $\langle 5A \rangle$. Thus $|\widehat{\mathcal{B}}_5(Ru)|^{5B}$ contains the six vertices corresponding to six Sylow
$5$-subgroups of $Ru$. If $5B \in S$, then $5B \in N_{Ru}(5A^2)$ for the two copies of $5A^2 \leq S$.
Conversely, if $5B \in N_{Ru}(5A^2)$, then $\langle 5B, 5A^2 \rangle$ is a Sylow $5$-subgroup of $Ru$.
Therefore $|\widehat{\mathcal{B}}_5(Ru)|^{5B}$ consists of six components, each consisting of two edges with their three vertices.

In the action of $N_{Ru}(\langle 5B \rangle) \simeq (5:4) \times A_5$ on $|\widehat{\mathcal{B}}_5(Ru)|^{5B}$, the Frobenius group $5:4$ acts trivially and the action of $A_5$ is equivalent to the action of $L_2(5)$ on its building. Thus the reduced Lefschetz module $\widetilde{L}_{A_5}(|\widehat{\mathcal{B}}_5(Ru)|^{5B})$ is the projective irreducible Steinberg module, and $\widetilde{L}_{N_{Ru}(\langle 5B \rangle)}(|\widehat{\mathcal{B}}_5(Ru)|^{5B})$ is indecomposable with vertex
$5= \langle 5B \rangle$. Therefore $\widetilde{L}_{Ru}(\widehat{\mathcal{B}}_5)$ contains one indecomposable summand with vertex $\langle 5B \rangle$, and it lies in a block with the same group as defect group.

\smallskip
$\mathbf{G = HS, Co_3, Co_2}$ and $\mathbf{p=5}$\\
These three groups have only two conjugacy classes of radical $5$-subgroups, $\langle 5B \rangle$ and the Sylow $5$-subgroup $5^{1+2}_+$. Thus $|\widehat{\mathcal{B}}_5(G)|$ is a disjoint set, with vertices corresponding to ${\rm Syl}_5(G)$. For $G = HS$ or $G=Co_3$, $N_G (\langle 5B \rangle) \simeq (5:4) \times A_5$, but for $G=Co_2$ we have $N_G (\langle 5B \rangle) \simeq (5:4) \times S_5$. The fixed point set $|\widehat{\mathcal{B}}_5(G)|^{5B}$ consists of six vertices, corresponding to those groups $S \in {\rm Syl}_5(G)$ with centers $\langle 5A \rangle = Z(S)$ lying in the $A_5$ (or $S_5$) of $N_G (\langle 5B \rangle)$. Thus $\widetilde{L}_{N_G (\langle 5B \rangle)}(|\widehat{\mathcal{B}}_5(G)| ^{5B})$ is
indecomposable with vertex $5= \langle 5B \rangle$. (Note the Steinberg module for $A_5$ extends in a natural
way to one for $S_5$ \cite{schmid92}, although $S_5$ has two projective modules which restrict to the Steinberg
module for $A_5$.) Therefore $\widetilde{L}_G(\widehat{\mathcal{B}}_5)$ contains one indecomposable summand with vertex $\langle 5B \rangle$, lying in a block with the same group as defect group.

Observe that $HS$ has another class $5C$ of noncentral elements; the normalizer $N_{HS}(\langle 5C \rangle) = 5^2:4$. The elementary abelian $5^2$ contains a unique central subgroup $\langle 5A \rangle$.  This implies that the fixed point set $|\widehat{\mathcal{B}}_5(HS)|^{5C}$ is contractible, and $\langle 5C \rangle$ is not a vertex of the Lefschetz module.

\smallskip
$c)$. $\mathbf{He}$ and $\mathbf{p=7}$\\
The sporadic group of Held contains five conjugacy classes of elements of order $7$, with $7C$
the $7$-central elements; the $7B$ are inverses of the $7A$ and the $7E$ are inverses of the
$7D$. The normalizers are $N_{He}(7AB)=(7:3) \times L_2(7)$, $N_{He}(7DE)=7^2.6$, and
$N_{He}(\langle 7C \rangle)=N_{He}(7^{1+2}_+)=7^{1+2}_+:(S_3 \times 3)$. There is one class of
purely central elementary abelian $7C^2$, as well as two mixed classes of elementary abelian
$7^2$, each containing six $7$-central elements; see \cite[Section 2.4]{but81}. There are three classes of $7$-radical subgroups, $7AB$, $7C^2$, and $7^{1+2}_+$; the last two are distinguished \cite[Section 2]{an97}. Thus $|\widehat{\mathcal{B}}_7(He)|$ is a graph with two types of vertices. Each Sylow $7$-subgroup contains three copies of $7C^2$, and each elementary abelian $7C^2$ lies in eight Sylow subgroups. Note $7AB \leq N_{He}(7^{1+2}_+)=N_{He}(\langle 7C \rangle)$ iff $7C \in C_{He}(7AB)=7 \times L_2(7)$. Thus the fixed point set $|\widehat{\mathcal{B}}_7(He)|^{7AB}$ contains eight vertices corresponding to Sylow subgroups of Held. Such a Sylow will contain three copies of $7C^2$ also normalized by the $7AB$. If $7AB \leq N_{He}(7C^2)$, then they generate a Sylow subgroup $7^{1+2}_+=\langle 7AB, 7C^2 \rangle$. The fixed point set $|\widehat{\mathcal{B}}_7(He)|^{7AB}$ consists of eight contractible components, and $\widetilde{L}_{He}(\widehat{\mathcal{B}}_7)$ has a summand with vertex $\langle 7AB \rangle$, lying in a block with the same group as the defect group.

If $7DE \leq N_{He}(7^{1+2}_+)=N_{He}(\langle 7C \rangle)$, then $7C \in C_{He}(7DE)$. But the latter contains
only one copy of $\langle 7C \rangle$. If $7DE \leq N_{He}(7C^2)$, they generate a Sylow subgroup
$7^{1+2}_+=\langle 7DE, 7C^2 \rangle$. Thus the fixed point set $\Delta^{7DE}$ is contractible, consisting of the star corresponding to one Sylow subgroup and its three copies of $7C^2$. $\widetilde{L}_{He}(\widehat{\mathcal{B}}_7)$ does not contain any summand with vertex $\langle 7DE \rangle$.

\smallskip
$d)$. $\mathbf{He}$ and $\mathbf{p=3}$\\
The sporadic group of Held contains two conjugacy classes of elements of order $3$, denoted $3A$ and $3B$. The Sylow $3$-subgroups $3_+^{1+2}$ contain $14$ elements of type $3A$ and $12$ elements of type $3B$. There are two
conjugacy classes of elementary abelian $3$-subgroups of rank $2$, one of type $3A^2$ (a pure
group, with $8$ elements of type $3A$; note $3_+^{1+2}$ contains two such subgroups) and one mixed $3^2 = A_1B_3$
(containing $2$ elements of type $3A$ and $6$ elements of type $3B$; note $3_+^{1+2}$ contains two of these groups); see \cite[Section 2.2]{but81}. The nontrivial $3$-radical subgroups of Held \cite[Section 2]{an97} are the conjugacy classes of $\langle 3A \rangle, \langle 3B \rangle$, $3A^2$, and $3_+^{1+2}$. Thus the distinguished Bouc complex $|\widehat{\mathcal{B}}_3(He)|$ contains three types of vertices, corresponding to $\langle 3A \rangle$, $3A^2$, and $3_+^{1+2}$; note $|\widehat{\mathcal{B}}_3(He)|$ is a $2$-dimensional complex with maximal simplices corresponding to chains $\langle 3A \rangle \leq 3A^2 \leq 3_+^{1+2}$. We wish to compute the
fixed point set $|\widehat{\mathcal{B}}_3(He)|^{3B}$. Clearly $3B \in N_{He}(\langle 3A \rangle)$ iff $3B \in
C_{He}(3A)$ iff $3A \in C_{He}(3B)= 3 \times L_3(2)$. There are precisely $28$ subgroups of type $\langle
3A \rangle$ in this group, namely ${\rm Syl}_3(L_3(2))$. Next, $3B \in N_{He}(3_+^{1+2})$ iff $3B \in 3_+^{1+2} \leq C_{He}(3A) = 3.A_7$, where $3A=Z(3^{1+2}_+)$. An element of type $3B$ lying in $C_{He}(3A)$ has image in $A_7$ a permutation in the conjugacy class of $(123)(456)$. Such a permutation is contained in a unique Sylow $3$-subgroup of $A_7$. Thus there is a unique Sylow subgroup $S = 3_+^{1+2}$ in $He$ with center $\langle 3A \rangle$ and containing $3B$. So $|\widehat{\mathcal{B}}_3(He)|^{3B}$ contains $28$ edges connecting vertices of type $\langle 3A \rangle$ to vertices of type $3_+^{1+2}$. Clearly, if $3B \in 3_+^{1+2}$, then $3B$ normalizes the two elementary abelian subgroups of type $3A^2 \leq 3_+^{1+2}$. But if $3B \in N_{He}(3A^2)$, then they generate a Sylow, $\langle 3B, 3A^2 \rangle = 3_+^{1+2}$. Thus $|\widehat{\mathcal{B}}_3(He)|^{3B}$ contains $56$ vertices of type $3A^2$, and consists of $28$ components, each (contractible) component equal to two triangles sharing one common edge.

\begin{rem} Although the multiplication class coefficient $\xi(3A, 3A, 3B)= 168$ is nonzero,
it is still true that if two elements of type $3A$ commute with each other, then their product
either equals the identity or is another element of type $3A$. If the two commuting elements
generate an elementary abelian $3^2$, then this $3^2$ contains at least four elements of type
$3A$ (the given generators and their inverses). But this forces the $3^2$ to be a pure $3A^2$.
(The number $168$ can be explained as follows: in a fixed Sylow $3_+^{1+2}$ with a fixed
element $3B \in 3_+^{1+2}$, there are six ways to multiply two (noncommuting) elements of type
$3A$ to obtain the given element $3B$. Then each $3B$ is contained in $28$ Sylow subgroups, and
$168=6 \times 28$.) Thus the Benson complex $\Delta (He)$ equals the subcomplex of the
distinguished Bouc complex $|\widehat{\mathcal{B}}_3(He)|$ consisting of those terms
corresponding to the groups of type $\langle 3A \rangle$ and $3A^2$. The inclusion
$\Delta (He) \subseteq \widehat{\mathcal{B}}_3(He)$ induces an equivariant homotopy equivalence.
\end{rem}

The action of $L_3(2)$ on its $28$ Sylow $3$-subgroups yields a $28$-dimensional representation
$\chi = \chi _1 + 2 \chi _4 + \chi _5 + \chi _6$ (using the Atlas \cite{Atlas} notation, $\chi _1=1a$, $\chi
_4 = 6a$, $\chi _5 = 7a$, and $\chi _6 = 8a$). Note that
$N_{L_3(2)}(3)=S_3$, so the elements of order $4$ or $7$ have no fixed
points. The elements of order three fix precisely one subgroup $3$, and each element
of order two fixes four groups of order $3$. The Green correspondence implies that $Ind _{S_3}^{L_3(2)}(1)$ is a direct sum of the trivial module and a projective module. Therefore the reduced Lefschetz module
$\widetilde{L}_{L_3(2)}(|\widehat{\mathcal{B}}_3(He)|^{3B})$ is projective, with character $2 \chi _4 + \chi _5 +
\chi _6$. Note that $\chi _4 = \varphi _4$ is projective, lying in a block of defect zero. But
$\chi _5 + \chi _6$ is the projective cover $P_{L_3(2)}(\varphi _5)$, where $\chi _5 = \varphi
_5$ and $\chi _6 = \varphi _1 + \varphi _5$, which lies in the principal block. Note that the
$S_3$ in $N_{He}(\langle 3B \rangle ) \simeq S_3 \times L_3(2)$ acts trivially on $|\widehat{\mathcal{B}}_3(He)| ^{3B}$. It follows that $\widetilde{L}_{N_{He}(\langle 3B \rangle)}(|\widehat{\mathcal{B}}_3(He)|^{3B})$ has three summands with vertex $\langle 3B \rangle$. Therefore the reduced Lefschetz module $\widetilde{L}_{He}(\widehat{\mathcal{B}}_3)$ has precisely three nonprojective summands, all with vertex $3= \langle 3B \rangle$. Two of these summands lie in
the same block, with defect group also equal to $\langle 3B \rangle$, but the third summand
lies in the principal block.

\smallskip
$\mathbf{M_{24}}$ and $\mathbf{p=3}$\\
The situation for $M_{24}$ is very similar to that of $He$, with the same Sylow $3$-subgroup
and fusion pattern. Also, $N_{M_{24}}(\langle 3B \rangle) \simeq S_3 \times L_3(2)$ acts on
$|\widehat{\mathcal{B}}_3(M_{24})|^{3B}$ as $L_3(2)$ acts on its $28$ Sylow $3$-subgroups. Thus the reduced Lefschetz module $\widetilde{L}_{M_{24}}(\widehat{\mathcal{B}}_3)$ contains three indecomposable summands with vertex $3 = \langle 3B \rangle$. Two of these summands lie in the same block, having defect group $\langle
3B \rangle$. The third summand lies in the principal block.

\smallskip
$e)$. $\mathbf{Fi'_{24}}$ and $\mathbf{p=7}$\\
The Fischer group $Fi'_{24}$ has two classes of elements of order $7$, type $7A$ and $7B$ (the
latter are $7$-central).  The normalizers are $N_G(\langle 7A \rangle)=(7:6) \times A_7$ and
$N_G(\langle 7B \rangle)=N_G(7^{1+2}_+)=7^{1+2}_+:(S_3 \times 6)$. There are four conjugacy
classes of $7$-radical subgroups, $\langle 7A \rangle$, $7^{1+2}_+$, and two classes of purely central
elementary abelian $7B^2$.  The last three are distinguished. Thus $|\widehat{\mathcal{B}}_7(Fi'_{24})|$ is a graph with three types of vertices, corresponding to the Sylow subgroups and the two classes of
elementary abelian $7B^2$. The fixed point set $|\widehat{\mathcal{B}}_7(Fi'_{24})|^{7A}$ consists of $120$ contractible components, each component a star corresponding to one Sylow subgroup $7^{1+2}_+$ and its six purely central elementary abelian $7B^2$, three from each conjugacy class. These components correspond to
the $120$ Sylow $7$-subgroups of the alternating group $A_7$. The action of $A_7$ on ${\rm Syl}_7(A_7)$
yields the induced module
$$Ind_{7:3}^{A_7}(1a)=1a+10ab+14bb+15a+21a+35a$$
(the character values are $120,0,0,6,0,0,0,1,1$; the entry $6$, for example, can be thought of in
terms of the fact that the permutation $(123)(456)$ normalizes six different groups of order $7$ in $A_7$). The group ring of $A_7$ has five blocks at the prime $p=7$, with defects $1,0,0,0,0$. The module which affords $\chi_6 = \phi_5 =14b$ is projective and lies in block $3$, $\chi_8 = \phi_6 =21a$ is projective and lies in block $4$, and $\chi_9 = \phi_7 = 35a$ is projective and lies in block $5$. The projective cover $P_{A_7}(\phi_3)=
10ab+15a=\chi_3+\chi_4+ \chi_7$ lies in the principal block. This implies that the reduced Lefschetz module
$\widetilde{L}_{Fi'_{24}}(\widehat{\mathcal{B}}_7)$ contains five indecomposable summands with vertex $7 = \langle 7A \rangle$. Four of these summands lie in three blocks, having the same group as defect group (two summands are in the same block). The fifth summand lies in the principal block.

\smallskip
$f)$. $\mathbf{M}$ and $\mathbf{p=13}$\\
The Monster group $M$ has two classes of elements of order $13$, type $13A$ and $13B$ (the latter are $13$-central). The normalizers are $N_M(\langle 13A \rangle)= (13:6 \times L_3(3)).2$ and $N_M(\langle 13B \rangle)=N_M(13^{1+2}_+)=13^{1+2}_+:(3 \times 4.S_4)$. There is one class of purely central elementary abelian $13B^2$. The groups $\langle 13A \rangle$, $13B^2$, and $13^{1+2}_+$ are $13$-radical, and the last two are distinguished. Thus $|\widehat{\mathcal{B}}_{13}(M)|$ is a graph with two types of vertices; each Sylow contains six copies of $13B^2$, and each elementary abelian $13B^2$ lies in $14$ Sylow subgroups. Note that $13A \in N_M(13^{1+2}_+)=N_M(\langle 13B \rangle)$ iff $13B \in C_M(13A)=13 \times L_3(3)$. Thus the fixed point set $|\widehat{\mathcal{B}}_{13}(M)|^{13A}$ contains $144$ vertices corresponding to Sylow subgroups of $M$. For each such Sylow, the $13A$ also normalizes its six elementary abelian $13B^2$, and if $13A \in N_M(13B^2)$, then they generate a Sylow $13^{1+2}_+ = \langle 13A, 13B^2 \rangle$.

The group ring of $L_3(3)$ has six blocks at the prime $13$, of defects $1,0,0,0,0,0$. The action of $L_3(3)$ on its Sylow $13$-subgroups corresponds to the induced character
$$Ind_{13:3}^{L_3(3)}(1a)= 1a+13a+16abcd+27a+39a.$$
Note that $13a$ is projective and lies in block two, and $39a$ is projective and lies in block six. The projective cover $P_M(\phi_4)=16abcd+27a$ lies in the principal block. This implies that the reduced Lefschetz module $\widetilde{L}_{M}(\widehat{\mathcal{B}}_{13})$ contains three indecomposable summands with vertex $13 = \langle 13A \rangle$. Two of these summands lie in two blocks, having the same group as defect group.
The third summand lies in the principal block.
\end{proof}

\subsection{The complex of $\mathbf p$-centric and $\mathbf p$-radical subgroups}A $p$-subgroup $Q$ of $G$ $p$-{\it centric} if the center $Z(Q)$ is a Sylow $p$-subgroup of $C_G(Q)$. The collection of $p$-centric and $p$-radical subgroups $\mathcal{B}^{\rm{cen}}_p(G)$ is a subcollection of the distinguished Bouc collection $\widehat{\mathcal{B}}_p(G)$; see \cite[Proposition 3.1]{mgo4}.

A group has {\it parabolic characteristic} $p$ if all $p$-local subgroups which contain a Sylow $p$-subgroup of $G$ have characteristic $p$. For the groups with this property, the two collections $\widehat{\mathcal{B}}_p(G)$ and $\mathcal{B}^{\rm{cen}}_p(G)$ are equal \cite[Proposition 3.5]{mgo4}. All but three of the sporadic groups discussed in Section $3.1$ satisfy this condition. We now turn to the three cases above in which the complex of $p$-radical $p$-centric subgroups is not equal to the complex of distinguished $p$-radical subgroups.

\begin{thm}
Let $\Delta = |\mathcal{B}_3^{cen}(G)|$ and let $\widetilde{L}_G(\Delta)$ be the mod-$3$ reduced
Lefschetz module. Let $z \in G$ be a $3$-central element and $t \in G$ be an element of order
$3$ of noncentral type.

\begin{list}{\upshape\bfseries}
{\setlength{\leftmargin}{.5cm}
\setlength{\rightmargin}{0cm}
\setlength{\labelwidth}{1cm}
\setlength{\labelsep}{0.2cm}
\setlength{\parsep}{0.5ex plus 0.2ex minus 0.1ex}
\setlength{\itemsep}{.6ex plus 0.2ex minus 0ex}}
\item[a)] Let $G$ be $J_2$. Then the fixed point set $\Delta^z$ consists of ten vertices, equal to the building for $PGL_2(9)$, and the fixed point set $\Delta^t$ consists of four vertices, equal to the building for $L_2(3)$.\\
The module $\widetilde{L}_{J_2}(\Delta)$ contains two nonprojective summands. One summand has vertex $3 = \langle t \rangle$ and lies in a block with the same group as defect group. The second summand has vertex the $3$-central group $\langle z \rangle $ and lies in a block with the same group as defect group.

\item[b)] Let $G$ be $M_{24}$. Then the fixed point set $\Delta^z$ consists of ten contractible components, equivalent to $Syl_3(S_6)$, and the fixed point set $\Delta^t$ consists of $28$ contractible components, equivalent to the set of Sylow $3$-subgroups $Syl_3(L_3(2))$.\\
The module $\widetilde{L}_{M_{24}}(\Delta)$ contains four nonprojective
summands. Three of these summands have vertex $3 = \langle t \rangle$; two summands lie in one
block, with the same group as defect group, but the third summand lies in the principal block. The fourth summand has vertex the $3$-central group $ \langle z \rangle $, and lies in a block with the same group as defect group.

\item[c)] Let $G$ be $He$. Then the fixed point set $\Delta^z$ is connected and is homotopy equivalent to a subcomplex of the Quillen complex for the symmetric group $S_7$. The fixed point set $\Delta^t$ consists of $28$ contractible components, equivalent to the set of Sylow subgroups $Syl_3(L_3(2))$.\\
The module $\widetilde{L}_{He}(\Delta)$ contains four nonprojective summands. Three of these summands have vertex $3 = \langle t \rangle$;  two summands lie in one block, with the same group as defect group, but the third summand lies in the principal block. The fourth summand has vertex the $3$-central group $ \langle z \rangle $, and lies in a block with defect group the purely central elementary abelian group $3A^2$.
\end{list}
\end{thm}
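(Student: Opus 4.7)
The strategy is to imitate the analysis of Theorem~3.1, but working with the smaller collection $\mathcal{B}_3^{cen}(G)\subseteq\widehat{\mathcal{B}}_3(G)$. The first step is to identify which distinguished $3$-radical subgroups drop out. In all three groups $\langle 3A\rangle$ fails to be $3$-centric because $C_G(3A)$ has Sylow $3$-subgroup of order $27$ while $|Z(\langle 3A\rangle)|=3$. A direct calculation inside the extraspecial Sylow of $C_G(3A)$ shows that the purely central $3A^2$ of $M_{24}$ and $He$ is self-centralizing, hence $3$-centric (every Sylow is automatically $3$-centric). Thus $\Delta=|\mathcal{B}_3^{cen}(J_2)|$ is the discrete set of $280$ Sylow vertices, whereas for $M_{24}$ and $He$, $\Delta$ is the graph whose two vertex orbits $\{3A^2,3_+^{1+2}\}$ are joined by the containment relations noted in Theorem~3.1(d).

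For the fixed set $\Delta^t$ at a noncentral $t=3B$, the argument of Theorem~3.1(a,d) transfers unchanged: a vertex of $\Delta$ is $t$-fixed iff its center lies in $C_G(t)\simeq 3\times A_4$ (for $J_2$) or $3\times L_3(2)$ (for $M_{24}$ and $He$), and the familiar count of $\langle 3A\rangle$'s inside the latter together with the unique-Sylow-above argument yields the stated homotopy types. Since dropping the (contractible) $\langle 3A\rangle$-vertices from $\widehat{\mathcal{B}}_3^t$ does not change the homotopy type, the induced $N_G(\langle t\rangle)$-Lefschetz module is identical to that computed in Theorem~3.1, and hence by Theorem~2.7 the $\langle t\rangle$-summands of $\widetilde{L}_G(\Delta)$ and their block placements are exactly the ones already found there: one nonprojective summand for $J_2$, and three for $M_{24}$ and $He$ (two in the defect-$\langle t\rangle$ block and one in the principal block, by the same $Ind_{S_3}^{L_3(2)}(1)$-calculation).

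The new computation is $\Delta^z$ for a $3$-central $z$. Since a vertex $Q$ is $z$-fixed iff $Z(Q)\leq C_G(z)$, the $z$-fixed Sylows correspond bijectively to $Syl_3(C_G(z)/\langle z\rangle)$: $10$ for $J_2$ and $M_{24}$ (from $A_6$) and $70$ for $He$ (from $A_7$). For $J_2$ these are the only vertices of $\Delta^z$, and $N_G(\langle z\rangle)/\langle z\rangle = PGL_2(9)$ acts on them as on the projective line $\mathbb{P}^1(\mathbb{F}_9)$, i.e.~the rank-one Tits building. For $M_{24}$ and $He$ one must add the $z$-fixed $3A^2$-vertices; each such $3A^2$ maps to an order-$3$ subgroup of $A_6$ (resp.~$A_7$) that sits in a unique Sylow of $A_6$ (since Sylows of $A_6$ meet trivially) but in several Sylows of $A_7$ (e.g.~$\langle(123),(456)\rangle$ and $\langle(123),(457)\rangle$ share $\langle(123)\rangle$). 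This yields the $10$ disjoint contractible stars for $M_{24}$ and the connected subcomplex of the Quillen complex of $S_7$ for $He$; contractibility of each component in the disconnected cases follows from the conical map $3A^2\mapsto\langle 3A^2,z\rangle=3_+^{1+2}$.

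The summand information then follows from Theorem~2.7 applied at each of $\langle z\rangle$ and $\langle t\rangle$, after an easy adaptation of Proposition~2.8 (the same conical argument, replacing "Benson" by "distinguished $p$-centric" in the closure statement) rules out any other vertices. For the $\langle z\rangle$-summand, $\widetilde{L}_{N_G(\langle z\rangle)}(\Delta^z)$ is the inflation along $N_G(\langle z\rangle)\twoheadrightarrow N_G(\langle z\rangle)/\langle z\rangle$ of a Lefschetz module on the quotient, and its block is read off via the Brauer correspondence. For $J_2$ the inflated Steinberg of $PGL_2(9)$ lies in a block with defect group $\langle z\rangle$, and a parallel argument handles $M_{24}$. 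For $He$ the Lefschetz module on the $S_7$-subcomplex decomposes so that the relevant component sits in a block of $3.A_7.2$ whose Brauer correspondent in $He$ has defect group $3A^2$ rather than $\langle z\rangle$ or the full Sylow. This last identification — pinning down that the $\langle z\rangle$-summand for $He$ lies in a defect-$3A^2$ block — is where I expect the main technical obstacle, as it requires a block-by-block character computation in $3.A_7.2$ that has no direct analogue in the simpler $J_2$ and $M_{24}$ cases.
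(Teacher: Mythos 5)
Your overall strategy matches the paper's: identify that $\langle 3A\rangle$ is radical but not centric, recycle the Theorem~3.1 analysis for $\Delta^t$, and treat $\Delta^z$ separately before applying Theorem~2.7. Two points need repair. First, your claim that the $z$-fixed Sylow vertices ``correspond bijectively to $Syl_3(C_G(z)/\langle z\rangle)$'' is false for $M_{24}$ and $He$: in these groups the extraspecial Sylow contains $14$ elements of type $3A$, only two of which are central, so a $3$-central $z$ lies non-centrally in many Sylow subgroups, and each of those is a $z$-fixed vertex of $\Delta$. What you have counted is the vertex set of a deformation retract, not of $\Delta^z$ itself (for $J_2$ the two agree, since there all $3A$-elements of a Sylow are central). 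The paper supplies the missing step with two equivariant poset maps, $F_1(P)=\langle z\rangle\cdot P$ (using closure of the collection under passage to $3$-overgroups) followed by $F_2(P)=P\cap C_G(z)$, which retract $\Delta^z$ onto the subcomplex of groups $P$ with $z\in P\leq C_G(z)$; only after that does the quotient $3.H\to H$ identify the result with a subcomplex $\mathcal{E}_3(H)$ of the Quillen complex of $H=A_6$ or $A_7$, which is equivalent to all of $\mathcal{A}_3(H)$ because an element of type $(123)(456)$ lies in a unique Sylow $3$-subgroup of $H$.

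Second, and more seriously, the block placement of the $\langle z\rangle$-vertex summands --- which you explicitly leave open for $He$ --- is part of the assertion, and the paper settles it by a short explicit character computation rather than the ``block-by-block computation in $3.A_7.2$'' you anticipate. Since the reduced Lefschetz module of a Quillen complex is projective, one evaluates it as a virtual permutation module: for $S_6$ one gets $Ind_{3^2:D_8}^{S_6}(1)-1=9a=\varphi_5$, irreducible in a defect-zero block, whose inflation to $3.S_6=N_{M_{24}}(\langle z\rangle)$ is indecomposable with vertex $\langle z\rangle$ and lies in a block with defect group $\langle z\rangle$; for $S_7$ one gets $-(21a+15a)=-P_{S_7}(\varphi_2)$, indecomposable projective in a block of defect one, whose inflation to $3.S_7$ has vertex $\langle z\rangle$ but lies in a block whose defect group is the preimage $3^2=3A^2$. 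That preimage computation is precisely what distinguishes the $He$ answer from the $M_{24}$ one, and without it (and its $M_{24}$ counterpart, which you only gesture at) your argument does not establish the defect groups asserted in parts (b) and (c).
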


\begin{proof}
Since $\Delta$ is a subcomplex of $|\widehat{\mathcal{B}}_3(G)|$ we shall use some of the details given in the proof of Theorem $3.1 (a,d)$. For these three groups, the $3$-central group $\langle 3A \rangle$ is
$3$-radical but not $3$-centric. For $J_2$ the complex $\Delta$ is a discrete set of vertices,
corresponding to ${\rm Syl}_3(J_2)$. For $M_{24}$ and $He$, the complex $\Delta$ is a graph with two types of
vertices, corresponding to the Sylow 3-subgroups and to the purely central elementary abelian groups
$3A^2$. In all three cases, the fixed point set $\Delta^t$ is equivalent to that described in the
previous theorem, with the same explanation for the summands of the reduced Lefschetz module
which have vertex the group $\langle t \rangle$. In the previous theorem, the fixed point
sets for the $3$-central element $z$ were contractible; this is no longer true for the complex
$\Delta$.

For $G = J_2$, the fixed point set $\Delta^z$ consists of ten vertices corresponding to the
ten Sylow $3$-subgroups containing $z$. The action of $N_{J_2}(\langle z \rangle)=3.PGL_2(9)$ on these ten
vertices is the action of $PGL_2(9)$ on its building, with reduced Lefschetz module the Steinberg
module. Then Theorem $2.7$ implies that $\widetilde{L}_{J_2}(\Delta)$ has one summand with vertex
$\langle z \rangle$ which lies in a block with the same group as defect group. This concludes the proof of the statement in part $a)$ of the theorem.

For $G$ equal to either $M_{24}$ or $He$, note that the collection consisting of the Sylow $3$-subgroups and the purely central elementary abelian subgroups $3A^2$ is closed under passage to $3$-supergroups.
We have an equivariant poset map $F_1:\Delta^z \rightarrow \Delta^z$ defined by $F_1(P)= \langle z \rangle \cdot P$. Since this satisfies $F_1(P) \geq P$, there is an $N_G(\langle z \rangle )$-homotopy equivalence between $\Delta^z$ and the image of $F_1$, the subcomplex $\Delta^z_1$ determined by those groups $P$ which contain $z$; see \cite[2.2(3)]{gs}. Next, $F_2:\Delta^z_1 \rightarrow \Delta^z_1$ defined by $F_2(P)=P \cap C_G(z)$ is an equivariant poset map satisfying $F_2(P) \leq P$. Observe that if $z \in P = 3A^2$ then $P \leq C_G(z)$,
and if $z \in P = 3^{1+2}_+$ then either $z \in Z(P)$ and $P \leq C_G(z)$, or $z$ is contained in only one
of the two purely central elementary abelian $3A^2$ in $P$.  In this latter case, $F_2(P)=3A^2$ does lie in $\Delta^z_1$.  Therefore $\Delta^z_1$ is $N_G(\langle z \rangle )$-homotopy equivalent to the image of $F_2$, the subcomplex $\Delta^z_2$ determined by those groups $P$ satisfying $z \in P \leq C_G(z)$; of course $P$ is either $3A^2$ or $3^{1+2}_+$.

The centralizers of the $3$-central elements are $C_{M_{24}}(z)=3.A_6$ and $C_{He}(z)=3.A_7$. Let $H$ denote $A_6$ when $G=M_{24}$ and $A_7$ when $G=He$.  The quotient map $\pi:3.H \rightarrow H$ induces an isomorphism of the complex $\Delta^z_2$ with the subcomplex $\mathcal{E}_3(H)$ of the Quillen complex for H determined by the collection consisting of the Sylow $3$-subgroups of $H$ (elementary abelian of rank 2) and the groups of order three generated by elements in the conjugacy class of the cycle $(123)$. Since an element of $H$ in the conjugacy class of the product $(123)(456)$ of two cycles lies in a unique Sylow $3$-subgroup of $H$, the subcomplex $\mathcal{E}_3(H)$ is equivariantly homotopy equivalent to the entire Quillen complex $\mathcal{A}_3(H)$. Recall that the reduced Lefschetz module for the Quillen complex of any finite group is projective.

For $G$ equal to either $M_{24}$ or $He$, the normalizers of the $3$-central elements are $N_{M_{24}}(\langle z \rangle )=3.S_6$ and $N_{He}(\langle z \rangle )=3.S_7$.
For $S_6$, the Sylow $3$-subgroups satisfy the trivial intersection property; in $M_{24}$, each $3A^2$ lies in a unique $S \in {\rm Syl}_3(M_{24})$. The induced module
$$Ind_{3^2:D_8}^{S_6}(1a)-1a=9a=\varphi_5$$
yields a projective irreducible module lying in a defect zero block of $\mathbb{F}_3S_6$. Consequently, the reduced Lefschetz module $\widetilde{L}_{M_{24}}(\Delta)$ contains a summand with vertex $3=\langle z \rangle$. This proves part $b)$ of the theorem.

For the symmetric group $S_7$ we have
$$Ind_{S_3 \times S_4}^{S_7}(1a) + Ind_{(S_3 \times S_3):2}^{S_7}(1a) - Ind_{S_3 \times S_3}^{S_7}(1a) - 1a = -(21a+15a)= -(\chi_8+\chi_{12})= -P_{S_7}(\varphi_2)$$
is projective and indecomposable and lies in a block of defect one. Thus $\widetilde{L}_{He}(\Delta)$ has a summand with vertex $3^2$, and this concludes the proof of part $c)$ of the theorem.
\end{proof}

\subsection{One last example}We conclude with one further example; although the Sylow subgroup is not extraspecial, the complex of distinguished $3$-radical subgroups is very simple (a disjoint collection of vertices).

\begin{thm}
Let $G = J_3$ and $p=3$.  Then the reduced Lefschetz module for the collection of
distinguished $3$-radical subgroups has precisely one nonprojective summand with vertex a
group of order $3$ (of noncentral type) and lying in a block with the same group as
defect group.
\end{thm}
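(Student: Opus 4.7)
The plan is to follow the template established in Theorem 3.1, adapted to the fact that although the Sylow $3$-subgroup of $J_3$ is no longer extraspecial of order $p^3$ (it has order $3^5$), the distinguished Bouc complex is structurally very simple by hypothesis (a disjoint union of vertices). First I would identify the distinguished $3$-radical subgroups of $J_3$; since $|\widehat{\mathcal{B}}_3(J_3)|$ is a discrete set, these must form a single conjugacy class with no proper chains, and the natural candidate is the class of Sylow $3$-subgroups. This can be confirmed from An's tables of radical $p$-subgroups of the sporadic groups. Hence the vertices of $|\widehat{\mathcal{B}}_3(J_3)|$ correspond bijectively to $\mathrm{Syl}_3(J_3)$.

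Next, I would apply Proposition 2.8 to restrict the possible vertices of indecomposable summands of $\widetilde{L}_{J_3}(\widehat{\mathcal{B}}_3)$. Since the $3$-central elements (type $3A$) lie in the Benson collection, they produce contractible fixed point sets, so no vertex can contain a $3A$-element. Consulting the Atlas for the $3$-local structure of $J_3$, I would then show that every nontrivial purely noncentral $3$-subgroup must in fact be cyclic of order $3$ generated by an element of type $3B$; any candidate elementary abelian $3^2$ of purely noncentral type must be ruled out by checking that it either fails to exist or fixes a contractible subcomplex of the vertex set (so contributes no summand by Theorem 2.7). Thus the only nontrivial vertex possible is $\langle t \rangle$ with $t$ of type $3B$.

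Now I apply Theorem 2.7 with $Q = \langle t \rangle$. A Sylow $3$-subgroup $S$ is fixed by $t$ if and only if $t \in N_{J_3}(S)$, and since $N_{J_3}(S) = N_{J_3}(Z(S))$ with $Z(S) = \langle 3A \rangle$ cyclic of order $3$, this is equivalent to $Z(S) \leq C_{J_3}(t)$. Enumerating the $3A$-subgroups of order $3$ contained in $C_{J_3}(t)$ therefore yields the fixed vertex set $\Delta^t$. I then have to describe the action of $N_{J_3}(\langle t \rangle)$ on $\Delta^t$, and by analogy with the building-like fixed point sets that appear repeatedly in Theorem 3.1 (e.g.\ the $L_2(3)$ and $L_2(5)$ cases), the natural expectation is that this action factors through a Lie-type quotient whose reduced Lefschetz module on a building is the irreducible projective Steinberg module.

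The main obstacle is precisely this last verification: showing that $\widetilde{L}_{N_{J_3}(\langle t \rangle)}(\Delta^t)$ is a single indecomposable module with vertex $\langle t \rangle$, rather than a proper sum of such modules or something containing a projective piece. This requires explicit identification of $C_{J_3}(t)$ and $N_{J_3}(\langle t \rangle)$ from the Atlas, and recognition of the permutation action on the fixed Sylows as the action of some component on its set of Sylow $3$-subgroups or its building. Once this indecomposability is established, Theorem 2.7 delivers a unique indecomposable summand of $\widetilde{L}_{J_3}(\widehat{\mathcal{B}}_3)$ with vertex $\langle t \rangle$, and the Brauer correspondence embedded in the proof of that theorem places this summand in a $3$-block of $J_3$ with defect group $\langle t \rangle$, completing the proof.
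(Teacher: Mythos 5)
Your overall strategy is the same as the paper's: identify $|\widehat{\mathcal{B}}_3(J_3)|$ as the discrete set $\mathrm{Syl}_3(J_3)$, kill all vertices containing $3$-central elements, show the only purely noncentral subgroup is a cyclic group of order $3$, compute its fixed point set inside the centralizer, recognize a Steinberg module, and invoke Theorem~2.7. However, two concrete details in your plan are wrong, and the second would derail the computation. First, the Atlas labels are reversed: in $J_3$ it is the class $3B$ that is $3$-central (lying in $Z(S)$), while $3A$ is noncentral, with $C_{J_3}(3A)=3\times A_6$ and $N_{J_3}(\langle 3A\rangle)=3{:}PGL_2(9)$. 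Second, and more seriously, you assert that $Z(S)=\langle 3A\rangle$ is cyclic of order $3$; in fact $S$ has order $3^5$ and is \emph{not} extraspecial, and $Z(S)=3B^2$ is elementary abelian of rank $2$. The correct criterion is therefore $t\in N_{J_3}(S)=N_{J_3}(3B^2)$ iff $3B^2\leq C_{J_3}(t)=3\times A_6$, and one must enumerate the conjugates of the rank-two group $3B^2$ in $3\times A_6$: these are exactly the ten Sylow $3$-subgroups of $A_6$, giving ten fixed vertices on which $PGL_2(9)$ acts as on the projective line $\mathbb{F}_9\cup\{\infty\}$, whence the Steinberg module. Enumerating cyclic $3$-central subgroups of order $3$ in $C_{J_3}(t)$, as you propose, gives a completely different (much larger) set --- $A_6$ alone contains $40$ subgroups of order $3$ --- and the building identification fails.

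Two smaller gaps. Proposition~2.8 is stated for the Benson complex, and here the Bouc complex is a discrete set on which the contractibility of the $3$-central fixed point set is not obvious; the paper first checks that the $3$-central class is closed under commuting products ($\xi(3B,3B,3A)=0$) and then uses the equivariant homotopy equivalence of $|\widehat{\mathcal{B}}_3(J_3)|$ with the Benson complex (which contains both $\langle 3B\rangle$ and $3B^2$) to conclude $|\widehat{\mathcal{B}}_3(J_3)|^{3B}$ is contractible. Finally, to see that $\langle 3A\rangle$ is the \emph{only} purely noncentral $3$-subgroup (not just that there is no purely noncentral $3^2$), the clean argument is that any such subgroup lies in $C_{J_3}(3A)=3\times A_6$ and every $3$-element of $A_6$ is $3$-central in $J_3$.
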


\begin{proof}
The third Janko group $J_3$ has a Sylow $3$-subgroup $S$ of order $3^5$, and contains two conjugacy classes of
elements of order $3$, denoted $3A$ and $3B$; the class $3B$ lies in the center of the Sylow subgroup $S$. The center $Z(S) = 3B^2$ is an elementary abelian $3$-group of rank $2$ containing $8$ elements of type $3B$. The normalizers are $N_{J_3}(\langle 3A \rangle)=3:PGL_2(9)$, with $C_{J_3}(3A)= 3 \times A_6$, and $N_{J_3}(S)=N_{J_3}(3B^2)=3^2.(3 \times 3^2):8$. The nontrivial $3$-radical subgroups of
$J_3$ are the groups of type $\langle 3A \rangle$ and the Sylow $3$-subgroups of $J_3$; see \cite{kot}. Thus the
distinguished Bouc complex $|\widehat{\mathcal{B}}_3(J_3)|$ consists only of the $2^4 \cdot 5 \cdot 17 \cdot 19$
vertices corresponding to ${\rm Syl}_3(J_3)$. The fixed point set $|\widehat{\mathcal{B}}_3(J_3)|^{3A}$ can be computed by noting that $3A \in N_{J_3}(S)$ iff $3A \in S = C_{J_3}(3B^2)$ iff $3B^2 \leq C_{J_3}(3A) = 3 \times A_6$. There are precisely ten groups of type $3B^2$ in $3 \times A_6$, namely $Syl_3(A_6)$. The action of $PGL_2(9)$ on the ten vertices of $|\widehat{\mathcal{B}}_3(J_3)|^{3A}$ is the action on the projective line $\mathbb{F}_9 \cup \lbrace \infty \rbrace$.

Thus $\widetilde{L}_{PGL_2(9)}(|\widehat{\mathcal{B}}_3(J_3)|^{3A})$ is the projective irreducible Steinberg module and $\widetilde{L}_{N_{J_3}(\langle 3A \rangle)}(|\widehat{\mathcal{B}}_3(J_3)|^{3A})$ is indecomposable with vertex $\langle 3A \rangle$. The reduced Lefschetz module $\widetilde{L}_{J_3}(\widehat{\mathcal{B}}_3)$ contains one indecomposable summand with vertex $\langle 3A \rangle$, lying in a block with the same group as defect group.

The $3$-central elements in $J_3$ are closed since the multiplication class coefficient $\xi(3B, 3B, 3A) = 0$, but the Benson complex is slightly more complicated, containing $3B$ and $3B^2$. Since $|\widehat{\mathcal{B}}_3(J_3)|$ is homotopy equivalent to the Benson complex \cite[Theorem 3.1 and 4.4]{mgo3}, $|\widehat{\mathcal{B}}_3(J_3)|^{3B}$ is contractible. The reduced Lefschetz module $\widetilde{L}_{J_3}(\widehat{\mathcal{B}}_3)$ contains no summands with a vertex containing a $3$-central element. Since $C_{J_3}(3B) = 3 \times A_6$ and all $3$-elements in $A_6$ are $3$-central, the group $3= \langle 3A \rangle$ is the only purely noncentral subgroup of $J_3$. Thus $\widetilde{L}_{J_3}(\widehat{\mathcal{B}}_3)$ has only one nonprojective summand.
\end{proof}

\bibliographystyle{amsalpha}

\end{document}